\newif\ifincludeprevious
\newtheorem{theorem}{Theorem}
\numberwithin{theorem}{section}
\newtheorem{proposition}[theorem]{Proposition}
\newtheorem{lemma}[theorem]{Lemma}
\newtheorem{remark}[theorem]{Remark}
\newtheorem{example}[theorem]{Example}
\newcommand{\lra}{\longrightarrow}
\renewcommand{\epsilon}{\varepsilon}
\newcommand{\tnull}{\text{null}}
\numberwithin{equation}{section}
\date{}
\title{\textbf{On the Schottky problem for genus five Jacobians with a vanishing theta null}}
\author{Daniele Agostini and Lynn Chua}
\begin{document}

\maketitle

\begin{abstract}
We give a solution to the weak Schottky problem for genus five
Jacobians with a vanishing theta null, answering a question of
Grushevsky and Salvati Manni. More precisely, we show that if a
principally polarized abelian variety of dimension five has a
vanishing theta null with a quadric tangent cone of rank at most three,
then it is in the Jacobian locus, up to extra irreducible
components. We employ a degeneration argument, together with a study
of the ramification loci for the Gauss map of a theta divisor.
\end{abstract}

\section{Introduction} \label{section:intro}

The \emph{Schottky problem} asks to recognize the Jacobian locus
$\mathcal{J}_g$ inside the moduli space $\mathcal{A}_g$ of principally polarized abelian varieties of dimension at least $4$, and
it has been a central problem in algebraic geometry for more than a
century. In the most classical interpretation, the problem asks for
equations in the period matrix $\tau$ that vanish exactly on the
Jacobian locus. In this form, the Schottky problem is completely solved only in genus $4$, with an explicit equation given by Schottky \cite{Schottky1888} and Igusa \cite{IgusaSchottkyFour1981}. The \emph{weak Schottky problem} asks for explicit equations that characterize Jacobians up to extra irreducible components. A solution to this problem was given in genus $5$ by Accola \cite{AccolaSchottkyFive1983} and, in a recent breakthrough, by Farkas, Grushevsky and Salvati Manni in all genera \cite{FarkasGrushevskySalvatiManniWeakSchottky2017}. 

There have been many other approaches to the Schottky problem over the
years; see \cite{GrushevskySchottkySurvey2012} for a recent
survey. One noteworthy approach is the study of the singularities of
the theta divisor. Indeed, Riemann's singularity theorem says that the theta divisor $\Theta$ of a genus $g$ Jacobian has a singular locus of dimension at least $g-4$. Andreotti and Mayer \cite{AndreottiMayerSingular1967} proved that the locus of abelian varieties with this property contains $\mathcal{J}_g$ as an irreducible component, thus providing a solution to the weak Schottky problem. Moreover, the singularities of the theta divisor have a deep relationship with the geometry of $\mathcal{A}_g$. In genus five, this has been thoroughly explored in \cite{FarkasGrushevskySalvatiManniVerraSingularities2014}.

In this work, we focus on Jacobians with a vanishing theta null, which
is an even two-torsion point in the theta divisor. The abelian
varieties with this property have been intensely studied
\cite{MumfordSiegel1983, BeauvillePrym1977, DebarreLieu1992,
  GrushevskySalvatiManniThetaTwo2007, GrushevskySalvatiManniVanishing2008} and they form a divisor $\theta_{\tnull}$ in $\mathcal{A}_g$. The Schottky problem in this case becomes that of recognizing $\mathcal{J}_g\cap \theta_{\tnull}$ inside $\theta_{\tnull}$.  

The first observation is that a vanishing theta null is automatically a singular point
of the theta divisor. Hence, following the Andreotti-Mayer philosophy,
one is led to study the local structure of $\Theta$ around the
singular point, and the first natural invariant is the \emph{rank of the quadric tangent cone}. More precisely, if the divisor is cut out by a theta function, $\Theta = \{ \theta=0 \}$, then the quadric tangent cone $Q_p\Theta$ at $p\in \Theta^{sing}$ is defined by the Hessian matrix evaluated at $p$:
\begin{equation}
Q_p\Theta \sim \begin{pmatrix} 
\frac{\partial^2\theta}{\partial {z_1}^2} & \frac{\partial^2\theta}{\partial z_1\partial z_2} & \cdots & \frac{\partial^2 \theta}{\partial z_1 \partial z_{g}}  \\   
\frac{\partial^2\theta}{\partial z_1\partial z_2} & \frac{\partial^2\theta}{\partial {z_2}^2} & \cdots & \frac{\partial^2 \theta}{\partial z_2 \partial z_{g}}  \\ 
\vdots & \vdots & \ddots & \vdots &  \\
\frac{\partial^2\theta}{\partial z_1\partial z_{g}} & \frac{\partial^2\theta}{\partial {z_2}\partial{z_{g}}} & \cdots & \frac{\partial^2 \theta}{\partial {z}^2_{g}}
\end{pmatrix} \,.
\end{equation}   
The rank of $Q_p\Theta$ is the rank of the Hessian. This leads to a stratification of $\theta_{\tnull}$, first introduced by Grushevsky and Salvati Manni \cite{GrushevskySalvatiManniVanishing2008},
\begin{equation}
\theta_{\text{null}}^0 \subseteq \theta_{\text{null}}^1 \subseteq \cdots\ \subseteq \theta^{g-1}_{\tnull} \subseteq \theta^{g}_{\tnull} = \theta_{\tnull}
\end{equation}
where $\theta^h_{\tnull}$ is the locus of abelian varieties with a
vanishing theta null, with a quadric tangent cone of rank at most $h$. In particular, if a Jacobian has a vanishing theta null, then a result of Kempf \cite{Kempf1973} shows that the quadric tangent cone has rank at most three, hence
\begin{equation}
\mathcal{J}_g \cap \theta_{\tnull} \subseteq \theta^3_{\tnull}\,.
\end{equation}
Grushevsky and Salvati Manni proved in
\cite{GrushevskySalvatiManniVanishing2008} that this inclusion is
actually an equality in genus $4$, confirming a conjecture of
H. Farkas. In the same paper, they ask whether $\mathcal{J}_g\cap
\theta_{\tnull}$ is an irreducible component of $\theta^3_{\tnull}$ in
higher genera, which would imply a solution to the weak Schottky problem for Jacobians with a vanishing theta null. Our main result is an affirmative answer to this question in genus $5$.

\begin{theorem}\label{thm:mainthm}
In genus five, the locus $\mathcal{J}_5\cap \theta_{\tnull}$ is an irreducible component of $\theta^3_{\tnull}$.
\end{theorem}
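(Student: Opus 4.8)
The statement is equivalent to the following: $\mathcal{J}_5\cap\theta_{\tnull}$ is irreducible of dimension $11$, and every irreducible component $Z$ of $\theta^3_{\tnull}$ containing it is equal to it. Irreducibility and the dimension count are classical: the locus $\Theta_{\tnull}\subset\mathcal{M}_5$ of curves with an effective even theta characteristic is an irreducible divisor (the forgetful map from the even spin moduli space $\mathcal{S}_5^+$ is generically injective there, and $\mathcal{S}_5^+$ is irreducible), so $\dim(\mathcal{J}_5\cap\theta_{\tnull})=\dim\mathcal{M}_5-1=11$ via the Torelli map; and $\mathcal{J}_5\cap\theta_{\tnull}\subseteq\theta^3_{\tnull}$ by Kempf's theorem as recalled above. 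Since $\mathcal{J}_5\cap\theta_{\tnull}$ is irreducible it lies in a unique component $Z$ of $\theta^3_{\tnull}$, and $Z=\mathcal{J}_5\cap\theta_{\tnull}$ exactly when $\dim Z=11$. So it suffices to assume $\dim Z\geq 12$ and derive a contradiction.

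The plan is a degeneration argument. Fix a toroidal compactification $\overline{\mathcal{A}_5}$ with boundary divisor $\partial$, and pass to $\overline{Z}$. Since complete subvarieties of $\mathcal{A}_5$ have dimension at most $10$, $\overline{Z}$ is not complete, so $\overline{Z}\cap\partial$ is a nonempty Cartier divisor in $\overline{Z}$ and has a component $W$ with $\dim W\geq 11$. The torus-rank-$\geq 2$ part of $\overline{\theta^3_{\tnull}}$ reduces, by the same kind of analysis applied below, to loci over $\mathcal{A}_3$ and $\mathcal{A}_2$ that have dimension $<11$, so $W$ must meet the torus-rank-$1$ stratum in a dense open subset; this stratum is a finite quotient of the universal abelian fourfold $\mathcal{X}_4\to\mathcal{A}_4$, so a general point of $W$ is a pair $(B,b)$ with $B\in\mathcal{A}_4$ and $b\in B$. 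Using the description of the limit of a degenerating family of theta functions due to Mumford and Namikawa — on the normalized $\mathbb{P}^1$-bundle over $B$ the limit divisor is cut out by $\theta_B(\,\cdot+b\,)\,\theta_B(\,\cdot-b\,)$ — I would compute the limit of the Hessian of the theta function at the limiting $2$-torsion point $\epsilon_0=(\eta,\delta)$, $\eta\in B[2]$, $\delta\in\tfrac12\mathbb{Z}/\mathbb{Z}$. The point is that the limiting $5\times 5$ Hessian is block-structured: its $4\times 4$ block is a translate of the Hessian of $\theta_B$ at $\eta$, and the remaining entries are governed by the values and gradients of $\theta_B$ at $\eta\pm b$. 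Imposing rank $\leq 3$ then forces both that $B$ has a vanishing theta null at $\eta$ with tangent cone of rank $\leq 3$ — so $B\in\theta^3_{\tnull}(\mathcal{A}_4)=\mathcal{J}_4\cap\theta_{\tnull}$, of dimension $8$, by the genus four theorem of Grushevsky and Salvati Manni — and an incidence condition relating $b$ to the geometry of the rank-$\leq 3$ quadric cone of $\Theta_B$ at $\eta$ (with a separate, more restrictive discussion when $\delta=\tfrac12$). As soon as this condition removes at least three dimensions from $b\in B$ we obtain $\dim W\leq 8+1<11$, a contradiction.

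The cases where this count is inconclusive — when $b$ is pinned only to a surface or a threefold in $B$ — I would settle using the Gauss map $\gamma_\Theta\colon\Theta\dashrightarrow\mathbb{P}^4$ of a general $(A,\Theta)\in Z$ (we may assume its vanishing theta null $\epsilon$ has tangent cone of rank exactly three, the rank-$\leq 2$ stratum $\theta^2_{\tnull}$ being of smaller dimension). The rank-$3$ quadric tangent cone at $\epsilon$ is a cone over a smooth conic $\cong\mathbb{P}^1$ with vertex a line; blowing up $\Theta$ at $\epsilon$ and analysing how $\gamma_\Theta$ restricts to the exceptional divisor, I expect this conic to produce a distinguished component of the ramification divisor of $\gamma_\Theta$ which, by an Andreotti-type reconstruction, must be the dual variety of a canonically embedded genus five curve $C$; then $A\cong JC$, so $Z\subseteq\mathcal{J}_5\cap\theta_{\tnull}$, the desired contradiction. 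If the Gauss-map reconstruction can be made to work for the generic member of an arbitrary component $Z$, the degeneration step instead serves only to dispose of the residual cases.

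The main obstacle will be the boundary computation of the second step: controlling not just the limit of the theta divisor but the limit of its Hessian at a $2$-torsion point across the torus-rank-$1$ stratum, so as to identify precisely which pairs $(B,b)$ and which degeneration type $\delta$ can occur, and to check that the resulting incidence conditions on $b$ genuinely drop the dimension enough. A secondary difficulty is making the Gauss-map argument unconditional in the residual cases — excluding decomposable $A$ and spurious components of the ramification divisor — which is exactly where the rank-three hypothesis must be exploited to prevent the branch locus from degenerating further.
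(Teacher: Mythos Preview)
Your overall architecture --- degenerate to the rank-one boundary of $\mathcal{A}_5$, read off the limiting rank condition, and bound dimensions there --- is the same as the paper's. The opening paragraph (irreducibility, dimension $11$, reduction to $\dim Z\le 11$) is fine. But the core boundary computation in your second paragraph is wrong, and the error propagates to the dimension count.

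You model the limiting $2$-torsion point as $(\eta,\delta)$ with $\eta\in B[2]$, and claim the limit $5\times 5$ Hessian has as its $4\times 4$ block the Hessian of $\theta_B$ at $\eta$, so that rank $\le 3$ forces $B\in\theta^3_{\tnull}(\mathcal{A}_4)=\mathcal{J}_4\cap\theta_{\tnull}$. This is not what happens. In the Fourier--Jacobi expansion, the characteristics with $\epsilon_g=1$ are the ones that generically contribute, and for these the theta constant limits (after the leading exponential factor) to $\theta'\!\left(z',\tau'\right)$ evaluated at $z'$ with $2z'=b$, \emph{not} at a $2$-torsion point of $B$. The limit Hessian is the bordered matrix
\[
D\gamma(z',\tau')=\begin{pmatrix}\left(\dfrac{\partial^2\theta'}{\partial z'_i\partial z'_j}\right) & \left(\dfrac{\partial\theta'}{\partial z'_i}\right)\\[1ex] \left(\dfrac{\partial\theta'}{\partial z'_j}\right) & 0\end{pmatrix},
\]
i.e.\ the Hessian of $\theta'$ bordered by its gradient, evaluated at $z'\in\Xi$ (this is Theorem~\ref{thm:rankD}). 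At a smooth point $z'\in\Xi$ the condition $\operatorname{rk}D\gamma\le 3$ is equivalent to $\operatorname{rk}d\gamma_{\Xi,z'}\le 1$, a Thom--Boardman condition on the Gauss map of $\Xi$. For a \emph{general} $(B,\Xi)\in\mathcal{A}_4$ this locus is nonempty --- every odd $2$-torsion point of $B$ lying on $\Xi$ satisfies it --- so the projection $\theta^3_{\tnull}\cap X_5\to\mathcal{A}_4$ is dominant, not contained in $\mathcal{J}_4\cap\theta_{\tnull}$. Your count $\dim W\le 8+1$ therefore has no basis; the actual fibers over general $B$ are $0$-dimensional and one must bound, stratum by stratum, how large $\overline{\Sigma}_1(\gamma_\Xi)$ can get as $(B,\Xi)$ specializes (this is exactly Proposition~\ref{prop:boundcodim5}). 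The case analysis uses that $\overline{\Sigma}_1(\gamma_\Xi)$ is finite for general $B$ (Adams--McCrory--Shifrin--Varley), together with finiteness of the Gauss map for Jacobians and the structure of $N_0$ in genus~$4$; none of this is captured by the genus-$4$ Grushevsky--Salvati Manni theorem in the way you invoke it.

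A second, smaller gap: you assert that the torus-rank-$\ge 2$ boundary contributes loci of dimension $<11$ ``by the same kind of analysis,'' but you do not carry it out, and in any case that analysis would inherit the error above. The paper sidesteps this entirely by instead producing an explicit point of $(\mathcal{J}_5\cap\theta_{\tnull})\cap X_5$ that is smooth in $\theta_{\tnull}$ (via nodal curves with a theta characteristic, Lemmas~\ref{lemma:bundleL}--\ref{lemma:smoothpoint} and Proposition~\ref{prop:smoothpoint5}); this guarantees $\mathcal{Z}_5$ meets $X_5$ and gives the codimension inequality directly, without needing to control deeper strata. Finally, your Gauss-map ``reconstruction'' paragraph is a sketch of a different and much harder program (an Andreotti-type argument from the ramification of $\gamma_\Theta$); it is not needed once the boundary bound is done correctly, and as written it is not a proof.
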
 

This solution to the weak Schottky problem is very much in the
classical spirit of finding explicit equations in the period
matrix. Indeed, the condition of having an even two-torsion point in
the theta divisor can be checked by evaluating the theta function
at these (finitely many) points, and then the rank of the
Hessian can be computed numerically. We include an example of
this computation in Example~\ref{example:computations}, where we use the Julia package presented in our companion paper \cite{AgostiniChuaNumerical2019}. For a similar computational approach to the Schottky problem in genus four, see \cite{ChuaKummerSturmfelsSchottky2018}.

The strategy of our proof is to bound the dimension of
$\theta^3_{\tnull}$, by working over a partial compactification
$\overline{\mathcal{A}_g}^1 = \mathcal{A}_g\cup \partial
\mathcal{A}_g$ of $\mathcal{A}_g$, following
\cite{GrushevskySalvatiManniThetaTwo2007}. Via a study of nodal
curves with a theta characteristic, we can describe the intersection
$(\mathcal{J}_5\cap \theta_{\tnull})\cap \partial{\mathcal{A}_g}$. We can then bound the dimension $\theta^3_{\tnull}\cap
\partial{\mathcal{A}_g}$ via a study of the ramification loci, or
Thom-Boardman loci, for the Gauss map of the theta divisor, leading
to the proof of our main theorem. In particular, the study of these
ramification loci fits with classical and recent work on the Gauss map
\cite{AndreottiTorelli1958, AdamsetalGauss1993,KraemerMonodromyGauss2015,
  CodogniGrushevskySernesiGauss2017, AuffarthCodogniSalvariManniGauss2019}, and might give a way to extend our main result in higher dimensions.
\medskip

\textbf{Acknowledgments:} We are grateful to Bernd Sturmfels for
suggesting to study the Schottky problem in genus five, and for his
continuous encouragement. We thank Paul Breiding, Gavril Farkas, Sam
Grushevsky, Thomas Kr\"amer, Riccardo Salvati Manni, Andrey Soldatenkov, Sasha Timme and Sandro Verra for
useful comments and discussions. This project was initated at the Max
Planck Institute for Mathematics in the Sciences in Leipzig, which both authors would like to thank for the hospitality and support at various stages of this work.

\section{Preliminaries}

We start by defining some standard notation and background, referring
the reader to \cite{IgusaThetafunctions1972, BirkenhakeLange2004} for more details. The \emph{Siegel
  upper-half space} $\mathbb{H}_g$ is the set of complex symmetric $g\times g$ matrices with positive definite imaginary part. A
\emph{characteristic} is an element $m \in
(\mathbb{Z}/{2}\mathbb{Z})^{2g}$, which we will represent as a vector $m = [\epsilon,\delta]$, where $\epsilon,\delta \in \{0,1\}^g$. The \emph{theta function with characteristic} $m$ is the holomorphic function $\theta[m] \colon \mathbb{C}^g \times \mathbb{H}_g \to \mathbb{C}$ defined as
\begin{equation}
\theta[m](z,\tau) := 
\sum_{n\in \mathbb{Z}^g} \exp \left( \pi i\left(n+\frac{\epsilon}{2}\right)^t\tau \left(n+\frac{\epsilon}{2}\right) + 2\pi i \left(n+\frac{\epsilon}{2}\right)^t\left(z+\frac{\delta}{2}\right) \right)\,.
\end{equation}
The theta function with
characteristic $m=[0,0]$ is called the \emph{Riemann theta function},
which we will sometimes denote simply by $\theta(z,\tau)$. All theta functions satisfy the fundamental \emph{heat equation},
\begin{equation}\label{eq:heatequation}
\frac{\partial^2 \theta[m]}{\partial{z_i}\partial{z_j}} (z,\tau) = (1+\delta_{ij}) \cdot  2\pi i \frac{\partial \theta[m]}{\partial \tau_{ij}} (z,\tau)\,.
\end{equation}
The \emph{sign} of a characteristic $m$ is defined as $e(m) =
(-1)^{\epsilon^t \delta}$, and we call a characteristic \emph{even} or
\emph{odd} if the sign is $1$ or $-1$ respectively. As a function of
$z$, $\theta[m](z,\tau)$ is even (respectively odd) if and only if the characteristic $m$ is even (respectively odd). There are $2^{g-1}(2^g+1)$ even theta characteristics and $2^{g-1}(2^g-1)$ odd theta characteristics.
The \emph{theta constants} are the functions on $\mathbb{H}_g$ obtained by evaluating the theta functions with characteristic at $z=0$,
\begin{equation}
\theta[m](\tau) := \theta[m](0,\tau)\,.
\end{equation} 
The theta constants corresponding to odd characteristics vanish identically.

Now we recall the construction of the moduli space
$\mathcal{A}_g$. For every $\tau \in \mathbb{H}_g$ one defines a
principally polarized abelian variety (ppav) by $A_{\tau} = \mathbb{C}^g / \Lambda_{\tau}$, where $\Lambda_{\tau} = \mathbb{Z}^g \oplus \tau\mathbb{Z}^g$. The polarization on $A_{\tau}$ is given by the \emph{theta divisor} $\Theta_{\tau} = \left\{  z  \,|\, \theta(z,\tau) = 0 \right\}$. Observe that the symmetric theta divisors on $A_{\tau}$ are  precisely the divisors $\Theta^{[m]}_{\tau} = \{ z \,|\, \theta[m](z,\tau) = 0 \}$, which are translates of $\Theta_{\tau}$ by two-torsion points.

The modular group $\Gamma_g =\operatorname{Sp}(2g,\mathbb{Z})$ acts on
$\mathbb{H}_g$ as follows. For $\gamma\in\Gamma_g$ and $\tau\in\mathbb{H}_g$,
\begin{equation}
\gamma = \begin{pmatrix} A & B \\ C & D \end{pmatrix}\,, \qquad \gamma\cdot \tau = (A\tau + B)(C\tau+D)^{-1}\,.
\end{equation}
Two ppavs $(A_\tau,\Theta_{\tau})$ and $(A_{\tau'},\Theta_{\tau'})$
are isomorphic if and only if $\tau'=\gamma\cdot \tau$ for some
$\gamma \in \Gamma_g$, in which case an isomorphism is given by
\begin{equation}\label{eq:explicitiso}
A_{\tau} \lra A_{\gamma\cdot \tau}\,, \qquad z\mapsto (C\tau+D)^{-t}z\,.
\end{equation}

The quotient $ \mathcal{A}_g =  \mathbb{H}_g / \operatorname{Sp}(2g,\mathbb{Z}) $ is the \emph{moduli space of principally polarized abelian varieties of dimension $g$}. It is an analytic space of dimension
\begin{equation}
\dim \mathcal{A}_g = \dim \mathbb{H}_g = \frac{g(g+1)}{2}\,.
\end{equation} 

We next describe how the theta functions transform under the modular
group $\Gamma_g$. The action on $\mathbb{H}_g$ extends to an action on $\mathbb{C}^g\times \mathbb{H}_g$  by 
\begin{equation}
\gamma = \begin{pmatrix} A & B \\ C & D \end{pmatrix}\,, \qquad \gamma\cdot (z,\tau) = ((C\tau+D)^{-t} z,\gamma\cdot \tau)\,.
\end{equation}
The corresponding action on the set of characteristics $(\mathbb{Z}/2\mathbb{Z})^{2g}$ is
\begin{equation}
\gamma = \begin{pmatrix} A & B \\ C & D \end{pmatrix}\,, \qquad \gamma\cdot \begin{bmatrix} \epsilon \\ \delta \end{bmatrix} = \begin{pmatrix} D & -C \\ -B & A \end{pmatrix}\begin{bmatrix} \epsilon \\ \delta \end{bmatrix} + \begin{bmatrix} \operatorname{diag}(CD^t) \\ \operatorname{diag}(AB^t) \end{bmatrix}\,.
\end{equation}
This can be combined with the action of the lattice $\mathbb{Z}^g$ to give the \emph{universal family} of abelian varieties $\mathcal{X}_g = (\mathbb{C}^g\times \mathbb{H}_g)/\mathbb{Z}^{2g}\rtimes \Gamma_g \to \mathcal{A}_g$. 
The theta functions obey the \emph{Theta Transformation Formula} \cite{BirkenhakeLange2004}:
\begin{equation}
\theta[\gamma \cdot m](\gamma\cdot (z,  \tau)) = \phi(\gamma,m,z,\tau)\cdot \sqrt{\det(C\tau+D)}  \cdot \theta[m](z,\tau)
\end{equation}	
where $\phi(\gamma,m,z,\tau) \in \mathbb{C}^*$ is an explicit function of the parameters with the same sign ambiguity as $\sqrt{\det(C\tau+D)}$. In particular, for a fixed $\tau$, the symmetric theta divisor $\Theta^{[m]}_{\tau}$ is sent by $\gamma\in \Gamma$ to $\Theta^{\gamma\cdot [m]}_{\gamma\cdot \tau}$, so the universal theta divisor $\{ (z,\tau) \,|\, \theta(z,\tau) = 0 \}$ is not well defined in $\mathcal{X}_g$. To address this issue, we define the subgroups
\begin{align}
\Gamma_g(4) &:= \left\{  \gamma \in \Gamma_g \, | \, \gamma \equiv \mbox{Id} \mod 4  \right\}\,, \\
\Gamma_g(4, 8) &:= \left\{  \begin{pmatrix} A & B \\ C & D \end{pmatrix} \in \Gamma_g(4) \, \mid \, \operatorname{diag}(A^tB) \equiv \operatorname{diag}(C^t B) \equiv 0 \mod 8  \right\}\,.
\end{align}
The group $\Gamma_{g}(4,8)$ is normal of finite index in $\Gamma_g$, so the corresponding quotients 
$\mathcal{A}_g(4,8) := \mathbb{H}_g / \Gamma_g(4,8)$ and $\mathcal{X}_g(4,8):=(\mathbb{C}^g\times \mathbb{H}_g)/(\mathbb{Z}^{2g}\rtimes \Gamma_g(4,8))$ are finite Galois covers of $\mathcal{A}_g$ and $\mathcal{X}_g$ respectively.  Moreover, $\Gamma_g(4,8)$ acts trivially on the set of characteristics, so the Theta Transformation Formula shows that for every characteristic $m$, the universal theta divisor 
\begin{equation}
\Theta^{[m]} = \{ (z,\tau) \,|\, \theta[m](z,\tau) = 0 \} = \bigcup_{\tau \in \mathcal{A}_g(4,8)} \Theta^{[m]}_{\tau} 
\end{equation}
is well-defined in $\mathcal{X}_g(4,8)$. However, we see that the
divisor $2\Theta = \{ 2z \,|\, z\in \Theta \}$ is defined universally
inside $\mathcal{X}_g$. Indeed, for any characteristic $m$, we have $2\Theta_{\tau}^{[m]} = 2\Theta^{[0,0]}_{\tau}$, because $\Theta^{[m]}$ and $\Theta^{[0]}$ differ just by a translation by a two-torsion point. Hence we get a well-defined set
\begin{equation}\label{eq:univtwicetheta}
2\Theta = \{ (2z,\tau) \,|\, \theta(z,\tau)=0 \} = \bigcup_{\tau \in \mathcal{A}_g} 2\Theta_{\tau}
\end{equation}
inside $\mathcal{X}_g$. Finally, it holds that for every $\gamma\in\Gamma_{g}(4,8)$,
\begin{equation}
\theta[m](0,\gamma\cdot \tau) = \sqrt{\det(C\tau+D)} \cdot \theta[m](0,\tau)\,.
\end{equation}
Thus, the even theta constants define a map to projective space,
\begin{equation}
\mathcal{A}_{g}(4,8) \longrightarrow \mathbb{P}^{2^{g-1}(2^g+1)-1}\,, \qquad [\tau] \mapsto [\theta[m](0,\tau)]_{m \text{ even}}
\end{equation} 
which is actually an embedding, and realizes $\mathcal{A}_{g}(4,8)$ as an irreducible quasiprojective variety. By definition, polynomials in the homogeneous coordinates of $\mathbb{P}^{2^{g-1}(2^g+1)}$ correspond to polynomials in the theta constants. 

We would like to point out that, due to the very concrete nature of
the theta functions, much of the geometry of abelian varieties
presented above can be computed explicitly, especially in low genus;
see for example \cite{ChuaKummerSturmfelsSchottky2018,
  RenSamSchraderSturmfelsKummer2013,
  RenSamSturmfelsTropicalizationClassical2014, FreitagSalvatiManniGopel2017}.

\section{Singularities of theta divisors at points of order two}

For a general ppav $(A,\Theta)\in \mathcal{A}_g$, the theta divisor is
smooth. The locus $N_0 \subseteq \mathcal{A}_g$ where the theta
divisor is singular is called the Andreotti-Mayer locus, and it is a
divisor in $\mathcal{A}_g$ \cite{AndreottiMayerSingular1967}. Mumford
\cite{MumfordSiegel1983}, Beauville \cite{BeauvillePrym1977} and
Debarre \cite{DebarreLieu1992} proved that $N_0$ has two irreducible
components $\theta_{\text{null}}$ and $N_0'$. The component $\theta_{\text{null}}$
is the locus where the theta divisor contains an even two-torsion
point of $A$, a so-called \emph{vanishing theta null}, and $N_0'$ is the residual
component. Theta divisors containing even two-torsion points correspond to
period matrices $\tau$ such that $\theta[m](0,\tau)=0$ for some even
characteristic $m$. These points are always singular, since all the derivatives $\frac{\partial \theta[m]}{\partial z_i}$ are odd. Moreover, we can give explicit equations for the lift of $\theta_{\text{null}}$ in $\mathcal{A}_{g}(4,8)$ as
\begin{equation}
\theta_{\text{null}} =\left\{ \tau \mid \theta[m](0,\tau) = 0,  \text{ for some even characteristic } m \right\}\,.
\end{equation} 
Thus, on $\mathcal{A}_g(4,8)$ the divisor $\theta_{\text{null}}$ splits into irreducible components corresponding to the even characteristics, and these components are all conjugate under the action of $\Gamma_g$.

As explained in Section~\ref{section:intro}, the rank of the quadric tangent cone at the vanishing theta null leads to a stratification
\begin{equation}
\theta_{\text{null}}^0 \subseteq \theta_{\text{null}}^1 \subseteq \cdots\ \subseteq \theta^{g-1}_{\tnull} \subseteq \theta^{g}_{\tnull} = \theta_{\tnull}
\end{equation}
where $\theta^h_{\tnull}$ is the locus of ppav whose theta divisor
contain an even two-torsion point with a quadric tangent cone of rank at
most $h$. In $\mathcal{A}_g(4,8)$, the locus $\theta^h_{\tnull}$ is given by the equations
\begin{align}
\begin{split}
\theta^h_{\tnull} &= \left\{ \tau \in \mathcal{A}_g(4,8) \mid \exists\, m \text{ even: } \theta[m](0,\tau) = 0,\,\,\, \operatorname{rk} \left( \frac{\partial^2 \theta[m]}{\partial z_i\partial z_j}(0,\tau)  \right)\leq h  \right\} \\
& = \left\{ \tau \in \mathcal{A}_g(4,8) \mid \exists \, m \text{ even: } \theta[m](0,\tau) = 0,\,\,\, \operatorname{rk} \left( (1+\delta_{ij}) \frac{\partial \theta[m]}{\partial \tau_{ij}}(0,\tau)   \right) \leq h  \right\}
\end{split}
\end{align}
where the second equality comes from the heat equation
\eqref{eq:heatequation}. One should be slightly careful with this
description because, as noted by Grushevsky and Salvati Manni, the
condition $\left\{ \operatorname{rk} \left( (1+\delta_{ij})
\frac{\partial \theta[m]}{\partial \tau_{ij}}(0,\tau)   \right) \leq h
\right\}$ is not well-defined on $\mathcal{A}_g(4,8)$; it is
well-defined only together with the condition $\{\theta[m](0,\tau)=0\}$. An alternative set of equations that fixes this problem is given in \cite{GrushevskySalvatiManniVanishing2008}.

If $C$ is a smooth curve of genus $g$, a vanishing theta null on  the Jacobian $J(C)$ corresponds to an even theta characteristic $\kappa$ such that $h^0(C,\kappa)\geq 2$. As explained in Section~\ref{section:intro},
\begin{equation}
\mathcal{J}_g \cap \theta_{\tnull} \subseteq \theta^3_{\tnull}\,.
\end{equation}  
In \cite{GrushevskySalvatiManniVanishing2008}, it was proved that this
is an equality in genus four, and it was asked whether
$\mathcal{J}_g\cap \theta_{\tnull}$ is an irreducible component of
$\theta^3_{\tnull}$ in higher genus. In the rest of this paper, we will
discuss this problem and present a proof for genus five. 

\section{Partial compactification and Gauss maps}

Our strategy is to bound the dimension of the irreducible component of
$\theta^3_{\tnull}$ which contains $\mathcal{J}_g\cap
\theta_{\tnull}$. To do so, we will use a partial compactification of $\mathcal{A}_g$.

\subsection{Partial compactification  of $\mathcal{A}_g$}\label{section:partialcompactification}

The idea is to follow the strategy of
\cite{GrushevskySalvatiManniThetaTwo2007}, by studying the boundary of
$\theta^3_{\tnull}$ inside the partial compactification
$\overline{\mathcal{A}}_g^1 = \mathcal{A}_g \cup \partial
\mathcal{A}_g$ of $\mathcal{A}_g$. This was introduced by \cite{IgusaDesingularization1967} and then studied by Mumford \cite{MumfordSiegel1983}. The boundary divisor $\partial\mathcal{A}_g$ parametrizes rank one degenerations of ppavs, which we briefly describe here, referring to \cite{MumfordSiegel1983} for more details. Let $(B,\Xi) \in \mathcal{A}_{g-1}$ be a ppav and let $G$ be an algebraic group, which is an extension of $B$ by $\mathbb{C}^*$:
\begin{equation}\label{eq:rank1}
0 \lra \mathbb{C}^* \lra G \lra B \lra 0\,.
\end{equation}
Then we can look at $G$ as a $\mathbb{C}^*$-bundle on $B$, which can
be completed naturally to a $\mathbb{P}^1$-bundle, together with two
sections $B_0$, $B_{\infty}$. These sections can be glued together via translation by a point $b\in B$, and the resulting variety $\overline{G}$ is a limit of abelian varieties. It also carries a divisor $D\subseteq \overline{G}$, which is a limit of theta divisors. Thus, the boundary divisor is a fibration $p\colon \partial\mathcal{A}_g \to \mathcal{A}_{g-1}$, with fiber $B/\operatorname{Aut}(B,\Xi)$ over $(B,\Xi)\in\mathcal{A}_{g-1}$, and the general fiber is $B/\langle \pm 1 \rangle$, the Kummer variety of $B$.
Analytically, points on the boundary can be seen as limits of $g\times g$ period matrices $\tau=\tau(t)$ such that the imaginary part of $\tau_{gg}$ goes to $+\infty$ as $t\to 0$, and all the other coordinates converge. Hence, the limit has the form 
\begin{equation}
\tau = \begin{pmatrix} \tau' & z' \\ z'^t & i\infty \end{pmatrix} 
\end{equation}
where $\tau'$ is the period matrix of $(B,\Xi) \in \mathcal{A}_{g-1}$ and $z'$ represents the translation point $b\in B$.
Hence we see that, at least around a point $(B,\Xi) \in \mathcal{A}_{g-1}$, we have a surjective map 
\begin{equation}\label{eq:maptoboundary}
\mathcal{X}_{g-1} \to \partial\mathcal{A}_{g-1}
\end{equation} that over $(B,\Xi)$ corresponds to $B \to B/\operatorname{Aut}(B)$. 

The boundary of the theta null divisor in the partial compactification was computed by Mumford \cite{MumfordSiegel1983}.

\begin{theorem}[Mumford]\label{thm:boundarynull}
Let $\theta_{\tnull}$ be the closure of the theta null divisor in $\overline{\mathcal{A}}_g^1$. Then 
\begin{equation}
\theta_{\tnull} \cap \partial\mathcal{A}_{g-1} = \left( \bigcup_{(B,\Xi)} 2_{B}(\Xi) \right) \cup p^{-1}(\theta_{\tnull,g-1})
\end{equation}
where $2_B(\Xi) = \{ 2x \,|\, x\in \Xi \}$ is the image of the divisor under the multiplication map. 
\end{theorem}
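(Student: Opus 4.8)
The plan is to work on the finite cover $\mathcal{A}_g(4,8)$, where the theta null divisor decomposes into the components $\Theta_m:=\{\theta[m](0,\tau)=0\}$ indexed by the even characteristics $m$, and to read off the boundary of each $\Theta_m$ from the Fourier--Jacobi expansion of $\theta[m]$. Fix a boundary point of $\overline{\mathcal{A}}_g^1$ lying over $(B,\Xi)\in\mathcal{A}_{g-1}$, with limiting period matrix $\left(\begin{smallmatrix}\tau' & z'\\ z'^t & i\infty\end{smallmatrix}\right)$, and set $q=e^{\pi i\tau_{gg}}$, so that $q\to 0$ is a local equation for $\partial\mathcal{A}_g$. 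Splitting a characteristic $m=[\epsilon,\delta]$ as $\epsilon=(\epsilon',\epsilon_g)$, $\delta=(\delta',\delta_g)$ and carrying out the sum in the defining series of $\theta[m](0,\tau)$ over the first $g-1$ lattice coordinates, one obtains
\[
\theta[m](0,\tau)=\sum_{k\in\mathbb{Z}}q^{(k+\epsilon_g/2)^2}\,e^{\pi i(k+\epsilon_g/2)\delta_g}\,\theta[\epsilon',\delta']\bigl((k+\tfrac{\epsilon_g}{2})z',\tau'\bigr).
\]
The first step is to extract the leading term of this expansion as $q\to 0$, which depends on whether $\epsilon_g=0$ or $\epsilon_g=1$.

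If $\epsilon_g=0$, the dominant contribution is the $k=0$ term and equals the genus $g-1$ theta constant $\theta[\epsilon',\delta'](0,\tau')$, all remaining terms being of order $O(q)$; since $e(m)=e([\epsilon',\delta'])$, the characteristic $m$ is even exactly when $[\epsilon',\delta']$ is, and then this theta constant is not identically zero. If $\epsilon_g=1$, the smallest exponent $q^{1/4}$ is attained only for $k=0$ and $k=-1$; using the parity relation $\theta[\epsilon',\delta'](-x,\tau')=e([\epsilon',\delta'])\,\theta[\epsilon',\delta'](x,\tau')$ together with the condition that $m$ is even (equivalently $\delta_g\equiv(\epsilon')^t\delta'\bmod 2$), one checks that these two terms reinforce rather than cancel and combine to $2\,i^{\delta_g}\,\theta[\epsilon',\delta'](\tfrac12 z',\tau')$, which is again not identically zero. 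Thus, away from $\partial\mathcal{A}_g$, the function $\theta[m](0,\tau)$ equals $q^{a_m}$ times a function which extends across $\partial\mathcal{A}_g$ with boundary value a holomorphic function $c_m(\tau',z')$ that is not identically zero, where $a_m\in\{0,\tfrac14\}$. The second step is then the standard remark that, since $q^{a_m}$ is a unit off $\partial\mathcal{A}_g$, the closure $\overline{\Theta_m}$ meets $\partial\mathcal{A}_g$ exactly in the divisor $\{c_m(\tau',z')=0\}$.

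It remains to identify these divisors and take the union. When $\epsilon_g=0$ the equation $c_m=0$ reads $\theta[\epsilon',\delta'](0,\tau')=0$, which is independent of $z'$ and cuts out $p^{-1}$ of the vanishing locus of the even theta constant $\theta[\epsilon',\delta']$ on $\mathcal{A}_{g-1}(4,8)$; ranging over all even $[\epsilon',\delta']$ this contributes $p^{-1}(\theta_{\tnull,g-1})$. When $\epsilon_g=1$ the equation $c_m=0$ reads $\theta[\epsilon',\delta'](\tfrac12 z',\tau')=0$, that is $z'\in 2\Theta^{[\epsilon',\delta']}_{\tau'}$; because $\Theta^{[\epsilon',\delta']}_{\tau'}$ is a translate of $\Xi$ by a two-torsion point, $2\Theta^{[\epsilon',\delta']}_{\tau'}=2_B(\Xi)$, so this contributes $2_B(\Xi)$ in the boundary fibre over $(B,\Xi)$, regardless of which characteristic was used. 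Taking the union over all even $m$ and all boundary fibres, and then descending the computation from $\mathcal{A}_g(4,8)$ to $\overline{\mathcal{A}}_g^1$ via the $\Gamma_g$-action, which permutes the even characteristics transitively, gives $\theta_{\tnull}\cap\partial\mathcal{A}_g=\bigl(\bigcup_{(B,\Xi)}2_B(\Xi)\bigr)\cup p^{-1}(\theta_{\tnull,g-1})$.

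The delicate point, and the one I expect to be the main obstacle, is making the second step rigorous in the case $\epsilon_g=1$: there the naive limit $\theta[m](0,\tau)\to 0$ holds over every boundary point, so one must first divide by $q^{1/4}$, which amounts to extending the universal theta functions $\theta[m]$ across $\partial\mathcal{A}_g$ using Mumford's theory of degenerating abelian varieties and their theta divisors \cite{MumfordSiegel1983}, and then verify that the resulting section is not identically zero with vanishing locus precisely $\{c_m=0\}$, and with no spurious components along $\partial\mathcal{A}_g$. One must also check that the remainder of the Fourier--Jacobi expansion is uniformly of higher order in $q$ (which follows by inspecting the exponents $k^2$, respectively $k(k+1)$, relative to the leading one) and that the final description is $\Gamma_g$-invariant so that it descends; once the compactified family of theta divisors is set up, the computation above pins down all the leading coefficients and the theorem follows.
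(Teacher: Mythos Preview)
The paper does not actually prove this theorem: it is stated with attribution to Mumford and the citation \cite{MumfordSiegel1983}, and is used as a black box. So there is no ``paper's own proof'' to compare against.

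That said, your proposal is the standard argument, and is essentially how the result is established in the literature (Mumford's original computation, and in more detail by Grushevsky--Salvati Manni). The Fourier--Jacobi expansion you write down is correct, the case split on $\epsilon_g$ is the right one, and your parity check in the case $\epsilon_g=1$ (that the $k=0$ and $k=-1$ terms reinforce to $2\,i^{\delta_g}\theta[\epsilon',\delta'](\tfrac12 z',\tau')$ precisely because $\delta_g\equiv(\epsilon')^t\delta'\bmod 2$) is accurate; note that here $[\epsilon',\delta']$ can be either even or odd, which is why both parities of $(g{-}1)$-characteristics appear in the boundary locus $2_B(\Xi)$. Your honest flagging of the delicate point---that one must extend $q^{-1/4}\theta[m]$ across the boundary as a section of the right bundle and exclude spurious components supported on $\partial\mathcal{A}_g$---is exactly the place where one has to invoke Mumford's compactification machinery rather than just the formal series; once that is in place, your identification of the two boundary pieces is correct.
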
 

More precisely, by $\bigcup 2_B(\Xi)$ we mean the image of the universal double theta divisor of \eqref{eq:univtwicetheta} under the map $\mathcal{X}_{g-1} \to \partial\mathcal{A}_{g-1}$. We denote this component by $X_g$ and sometimes we will not distinguish whether we take it in $\mathcal{X}_{g-1}$ or in $\partial\mathcal{A}_{g-1}$. By \eqref{eq:univtwicetheta} we can write  
\begin{equation}\label{eq_Xg}
X_g = \left\{ (2z',\tau') \,|\, \theta '\left(z',\tau'\right) = 0 \right\}
\end{equation} 
where now $\theta'$ is the Riemann theta function in genus $g-1$.  The intersections of the strata $\theta^h_{\tnull}$ with $X_g$ were determined by Grushevsky and Salvati Manni \cite{GrushevskySalvatiManniThetaTwo2007}.

\begin{theorem}[Grushevsky, Salvati Manni]\label{thm:rankD}
Denote again by $\theta^h_{\tnull}$ the closure in $\overline{\mathcal{A}}_g^1$ of the corresponding stratum in $\mathcal{A}_g$. Define the matrix
\begin{equation}
D\gamma(z',\tau') :=
\begin{pmatrix} 
\frac{\partial^2\theta'}{\partial {z'_1}^2} & \frac{\partial^2\theta'}{\partial z'_1\partial z_2'} & \cdots & \frac{\partial \theta'}{\partial z'_1 \partial z'_{g-1}} & \frac{\partial\theta'}{\partial z'_1} \\   
\frac{\partial^2\theta'}{\partial z'_1\partial z'_2} & \frac{\partial^2\theta'}{\partial {z'_2}^2} & \cdots & \frac{\partial \theta'}{\partial z'_2 \partial z'_{g-1}} & \frac{\partial\theta'}{\partial z'_2} \\ 
\vdots & \vdots & \ddots & \vdots & \vdots \\
\frac{\partial^2\theta'}{\partial z'_1\partial z'_{g-1}} & \frac{\partial^2\theta'}{\partial {z'_2}\partial{z'_{g-1}}} & \cdots & \frac{\partial \theta'}{\partial {z'}^2_{g-1}} & \frac{\partial\theta'}{\partial z'_{g-1}} \\
\frac{\partial \theta'}{\partial z'_1} & \frac{\partial \theta'}{\partial z'_2} & \cdots & \frac{\partial \theta'}{\partial z'_{g-1}} & 0
\end{pmatrix} 
\end{equation}
evaluated at $(z',\tau')$. Then
\begin{equation}
\theta^h_{\tnull} \cap X_g = \left\{ (2z',\tau') \,|\,  \theta'\left(z',\tau'\right) = 0, \,\,\, 
\operatorname{rk} D\gamma\left(z',\tau'\right) \leq h
\right\}\,.
\end{equation}
\end{theorem}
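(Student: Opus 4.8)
The plan is to read off the boundary behaviour of $\theta^h_{\tnull}$ from the Fourier--Jacobi expansion of a theta function with characteristic, recognising the matrix $D\gamma$ as the leading term of the Hessian in that expansion. Work in local analytic coordinates near a point of $\partial\mathcal{A}_g$ lying over $(B,\Xi)\in\mathcal{A}_{g-1}$, so that a nearby period matrix has the block form $\tau=\left(\begin{smallmatrix}\tau' & 2z'\\ 2z'^t & \tau_{gg}\end{smallmatrix}\right)$ with $\tau'$ the period matrix of $B$, $2z'$ the translation parameter, and $q=e^{2\pi i\tau_{gg}}\to 0$ the coordinate normal to $\partial\mathcal{A}_g$ (passing, if necessary, to a finite cover so that the relevant theta function is holomorphic in $q$). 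The characteristics responsible for the component $X_g$ are the even $m=[\epsilon',\epsilon_g;\delta',\delta_g]$ with $\epsilon_g=1$; since $X_g$ is $\Gamma_g$-invariant it is enough to run the following computation for one of them, and we take $m=[0,\dots,0,1;\,0,\dots,0]$, so that $\theta[m]$ degenerates through the Riemann theta function $\theta'$ in genus $g-1$.

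First I would split the lattice sum $\ZZ^g=\ZZ^{g-1}\times\ZZ$ in the definition of $\theta[m]$. Writing $z=(w,z_g)$ with $w\in\CC^{g-1}$, the two summands of lowest order in $q$ (those with $n_g\in\{0,-1\}$) give
\[
\theta[m](z,\tau)=c(q)\Bigl(e^{\pi i z_g}\theta'(w+z',\tau')+e^{-\pi i z_g}\theta'(w-z',\tau')\Bigr)+(\text{higher order in }q),
\]
where $c(q)$ is a fixed fractional power of $q$, in particular nonzero away from the boundary. Setting $z=0$ and using that $\theta'$ is even gives $\theta[m](0,\tau)=2c(q)\theta'(z',\tau')+(\text{h.o.t.})$, which recovers $X_g=\{(2z',\tau')\mid\theta'(z',\tau')=0\}$, consistently with Mumford's Theorem~\ref{thm:boundarynull}. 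Differentiating the displayed identity twice in $z$, evaluating at $z=0$, and using that $\partial\theta'/\partial z'_k$ is odd while $\partial^2\theta'/\partial z'_k\partial z'_l$ is even, the Hessian $H(\tau)=\bigl(\partial^2\theta[m]/\partial z_i\partial z_j\,(0,\tau)\bigr)$ becomes
\[
H(\tau)=2c(q)\left(\begin{pmatrix}\dfrac{\partial^2\theta'}{\partial z'_i\partial z'_j} & \pi i\,\dfrac{\partial\theta'}{\partial z'_i}\\[2mm] \pi i\,\dfrac{\partial\theta'}{\partial z'_j} & -\pi^2\theta'\end{pmatrix}(z',\tau')+(\text{h.o.t.})\right).
\]
Rescaling the last row and the last column of this matrix by $(\pi i)^{-1}$ leaves the rank unchanged and turns it into the bordered matrix with blocks $\bigl(\partial^2\theta'/\partial z'_i\partial z'_j\bigr)$, the gradient of $\theta'$, and the scalar $\theta'$; on $X_g$, where $\theta'(z',\tau')=0$, this is exactly $D\gamma(z',\tau')$. (By the heat equation~\eqref{eq:heatequation} one may equivalently work throughout with $(1+\delta_{ij})\,\partial\theta[m]/\partial\tau_{ij}$.)

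It remains to pass to the limit $q\to 0$. Since $c(q)\ne 0$, near $X_g$ one has $\operatorname{rk} H(\tau)=\operatorname{rk}\bigl(D\gamma(z',\tau')+(\text{h.o.t.})\bigr)$; as $\{\operatorname{rk}\le h\}$ is a closed condition, any point of $X_g$ in the closure of $\theta^h_{\tnull}$ satisfies $\operatorname{rk} D\gamma(z',\tau')\le h$, which gives the inclusion ``$\subseteq$''. For the reverse inclusion one must show that every point of $\{(2z',\tau')\mid\theta'(z',\tau')=0,\ \operatorname{rk} D\gamma(z',\tau')\le h\}$ is genuinely a limit of points of $\theta^h_{\tnull}$: near a general point of each irreducible component of this locus the rank of $D\gamma$ is exactly $h$, and using the smoothness of the variety of symmetric matrices of corank $g-h$ at such a matrix, together with the bordered form above, one produces an analytic arc into $\mathcal{A}_g$ along which both $\theta[m](0,\tau)=0$ and $\operatorname{rk} H(\tau)\le h$ are preserved. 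The hard part is exactly this last step: the rank bound is a closed, not an open, condition, so the rank could a priori jump up as one leaves $\partial\mathcal{A}_g$, and excluding this --- equivalently, showing that the analytic equations defining $\theta^h_{\tnull}$ acquire no spurious components inside the boundary divisor --- requires the precise shape of the Fourier--Jacobi expansion and a transversality argument; this is the technical core, the lattice splitting and the parity bookkeeping being routine.
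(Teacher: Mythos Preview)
The paper does not give its own proof of this statement; it is quoted as a result of Grushevsky and Salvati Manni from \cite{GrushevskySalvatiManniThetaTwo2007}, so there is no in-paper argument to compare against. Your approach is the right one and matches the original reference: the Fourier--Jacobi expansion with respect to $q=e^{2\pi i\tau_{gg}}$, the choice of a characteristic with $\epsilon_g=1$ so that the two lowest terms $n_g\in\{0,-1\}$ produce $\theta'(\,\cdot\pm z',\tau')$, and the identification of the leading Hessian with the bordered matrix $D\gamma$ (after rescaling the last row and column by $(\pi i)^{-1}$ and using $\theta'(z',\tau')=0$ for the $(g,g)$-entry) are all correct.

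The inclusion ``$\subseteq$'' is fine: dividing $H(\tau)$ by the nonvanishing prefactor $2c(q)$ gives a family of symmetric matrices converging to $D\gamma(z',\tau')$, and the rank is lower semicontinuous. Your sketch of ``$\supseteq$'', however, has a real gap. First, the assertion that on each irreducible component of the boundary locus the generic rank of $D\gamma$ is exactly $h$ is not justified; a component could sit inside $\{\operatorname{rk} D\gamma\le h-1\}$, and reducing to $\theta^{h-1}_{\tnull}$ only pushes the problem down. Second, even where the rank is exactly $h$, smoothness of the determinantal variety of symmetric matrices is not by itself enough: you must deform simultaneously inside the hypersurface $\{\theta[m](0,\tau)=0\}$ \emph{and} the rank locus, and these two constraints interact nontrivially through the higher Fourier--Jacobi coefficients. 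The transversality you invoke has to be argued, not asserted. (A minor further point: on $\mathcal{A}_g$ the characteristics are not separated, so a boundary point of $X_g$ could in principle be approached through $\theta^h_{\tnull}$ via a characteristic with $\epsilon_g=0$, landing in $p^{-1}(\theta_{\tnull,g-1})$; the clean statement lives on the cover $\mathcal{A}_g(4,8)$.)

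That said, for the application in this paper only the inclusion ``$\subseteq$'' is used: Lemma~\ref{lemma:boundcodimgeneral} and Proposition~\ref{prop:boundcodim5} require an \emph{upper} bound on $\theta^3_{\tnull}\cap X_g$, which follows from $\theta^3_{\tnull}\cap X_g\subseteq\{\operatorname{rk} D\gamma\le 3\}$. So the incomplete reverse inclusion does not affect Theorem~\ref{thm:mainthm}.
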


\subsection{Thom-Boardman loci of the Gauss map}
 
 The last result is best understood in terms of the Gauss map for the theta divisor. If $(B,\Xi) \in\mathcal{A}_{g-1}$ is a ppav, then the Gauss map is the rational map
 \begin{equation}
 \gamma\colon \Xi \dashrightarrow \mathbb{P}(T_0B) 
 \end{equation}
 that associates to each smooth point $p\in \Xi$ the tangent space $T_p\Xi$, seen as a subspace of $T_0B$ via translation by $p$. The base locus of the map is precisely the singular locus of $\Xi$, and if  $\Xi=\{ \theta'(z',\tau') = 0 \}$, then the Gauss map can be written explicitly as
 \begin{equation}\
 \gamma = \left[ \frac{\partial \theta'}{\partial z'_1}\,, \ldots\,, \frac{\partial \theta'}{\partial z_{g-1}} \right]\,.
 \end{equation}
 
 The Gauss map is related to Theorem \ref{thm:rankD} by the following
 result, whose proof is the same as \cite[Lemma
   2]{GrushevskySalvatiManniThetaTwo2007}; see also
 \cite{deJongTheta2010, RohrbachMaster2014}.
 
 \begin{lemma}\label{lemma:rankD}
 Let $(B,\Xi)\in \mathcal{A}_{g-1}$ be a ppav,  with the theta divisor given as $\Xi = \{ \theta'(z',\tau') = 0 \}$. If $z'\in \Xi$ is a smooth point, then
 \begin{equation}
 \operatorname{rk} D\gamma(z',\tau') \leq h \qquad \text{ if and only if } \qquad  \operatorname{rk } d\gamma_{z'} \leq h-2
 \end{equation}
 where $d\gamma_{z'}$ is the differential of the Gauss map at $z'$. If instead $z'\in \Xi$ is a singular point,
 \begin{equation}
 \operatorname{rk} D\gamma(z',\tau') \leq h \qquad \text{ if and only if } \qquad  \operatorname{rk } Q_{z'}\Xi \leq h
 \end{equation}
 where $Q_{z'}\Xi$ is the quadric tangent cone to $\Xi$ at $z'$.
 \end{lemma}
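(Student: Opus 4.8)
The plan is to reduce the statement to linear algebra about bordered symmetric matrices, exactly in the spirit of \cite[Lemma 2]{GrushevskySalvatiManniThetaTwo2007}. Write $v := \nabla\theta'(z',\tau')$ for the gradient and $H := \bigl(\partial^2\theta'/\partial z_i'\partial z_j'\bigr)(z',\tau')$ for the Hessian, so that $D\gamma(z',\tau')$ is the symmetric $g\times g$ matrix $\left(\begin{smallmatrix} H & v \\ v^t & 0 \end{smallmatrix}\right)$. If $z'$ is a singular point of $\Xi$ then $v=0$, so $D\gamma(z',\tau')$ is block diagonal with blocks $H$ and $(0)$; hence $\operatorname{rk} D\gamma(z',\tau') = \operatorname{rk} H = \operatorname{rk} Q_{z'}\Xi$, since $Q_{z'}\Xi$ is by definition the quadric with matrix $H$. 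This settles the second equivalence, so from now on I assume $z'$ is smooth, i.e.\ $v\neq 0$.

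The key linear-algebra fact I would isolate is: for any symmetric $(g-1)\times(g-1)$ matrix $H$ and any nonzero column vector $v$, one has $\operatorname{rk}\left(\begin{smallmatrix} H & v \\ v^t & 0\end{smallmatrix}\right) = \operatorname{rk}(\overline H) + 2$, where $\overline H$ denotes the composite linear map $\ker(v^t) \hookrightarrow \CC^{g-1} \xrightarrow{H} \CC^{g-1} \twoheadrightarrow \CC^{g-1}/\langle v\rangle$. I would prove this by choosing a basis in which $v = e_{g-1}$ and writing $H = \left(\begin{smallmatrix} H' & b \\ b^t & c\end{smallmatrix}\right)$ in the corresponding block decomposition; elementary row and column operations, using the two off-diagonal $1$'s coming from $v$ and $v^t$, bring the bordered matrix to the block-diagonal form $\operatorname{diag}\bigl(H',\, \left(\begin{smallmatrix} 0 & 1 \\ 1 & 0\end{smallmatrix}\right)\bigr)$, whose rank is $\operatorname{rk} H' + 2$, while in the same basis $\overline H$ is precisely $H'$. (Equivalently one counts coranks: the map $(w,s)\mapsto w$ identifies the kernel of the bordered matrix with $\{w\in\ker v^t : Hw\in\langle v\rangle\}$, and a dimension count gives the same formula.)

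Finally, I identify $\overline H$ with the differential $d\gamma_{z'}$. Since $\Xi=\{\theta'=0\}$ and $z'$ is smooth, $T_{z'}\Xi = \ker(v^t)$; the Gauss map factors near $z'$ as $\Xi \xrightarrow{\nabla\theta'} \CC^{g-1}\setminus\{0\} \xrightarrow{\pi} \PP(T_0 B)$, the differential of $\nabla\theta'|_{\Xi}$ at $z'$ is $H|_{T_{z'}\Xi}$, and under the standard identification $T_{[v]}\PP(T_0B) \cong \CC^{g-1}/\langle v\rangle$ the differential $d\pi_v$ is the quotient projection. Rescaling the lift $\nabla\theta'$ only multiplies this composite by a nonzero scalar, so the rank is unaffected, and hence $d\gamma_{z'} = \overline H$. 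Combining with the displayed identity gives $\operatorname{rk} D\gamma(z',\tau') = \operatorname{rk} d\gamma_{z'} + 2$, which is the first equivalence. There is no serious obstacle here — the content is purely linear algebra — and the only point that needs care is keeping the two canonical identifications (of $T_{z'}\Xi$ with $\ker v^t$ and of $T_{[v]}\PP(T_0B)$ with $\CC^{g-1}/\langle v\rangle$) consistent, so that the border contributes exactly $2$ to the rank.
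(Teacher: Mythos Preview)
Your proposal is correct and follows exactly the approach the paper itself points to: the paper does not give an independent proof but refers to \cite[Lemma~2]{GrushevskySalvatiManniThetaTwo2007} (and \cite{deJongTheta2010,RohrbachMaster2014}), and your argument---reducing to the bordered-matrix identity $\operatorname{rk}\left(\begin{smallmatrix} H & v \\ v^t & 0\end{smallmatrix}\right)=\operatorname{rk}\overline H+2$ for $v\neq 0$ and identifying $\overline H$ with $d\gamma_{z'}$ via the factorization $\Xi\to\CC^{g-1}\setminus\{0\}\to\PP^{g-2}$---is precisely that computation. The singular case is immediate from $v=0$, as you note.
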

 
For a map $f\colon X\to Y$ between two smooth connected varieties, the (closed) \emph{Thom-Boardman loci} are defined as
\begin{align}
\Sigma^i(f) &:= \{ p\in X \,|\, \dim \ker df_p \geq i \}\,,\\
\Sigma_i(f) &:= \Sigma^{\dim X-i}(f) = \{ p\in X \,|\, \operatorname{rk} df_p \leq i  \}\,.
\end{align} 

Hence we can interpret Lemma \ref{lemma:rankD} as saying that for a theta divisor $\Xi_{\tau'} = \{ \theta'(z',\tau') = 0 \}$, the locus of smooth points where $\operatorname{rk} D\gamma(z',\tau') \leq h$ coincides with the Thom-Boardman locus $\Sigma_{h-2}(\gamma_{\tau'})$ of the Gauss map $\gamma_{\tau'}\colon \Xi_{\tau'}^{sm} \to \mathbb{P}^{g-2}$. Moreover, Lemma \ref{lemma:rankD} suggests also that the Thom-Boardman loci can be naturally extended to a closed subset of $\Xi$ by
\begin{equation}
\overline{\Sigma}_{h-2}(\gamma_{\tau'}) = \Sigma_{h-2}(\gamma_{\tau'}) \cup \{ z' \in \Xi^{sing} \,|\, \operatorname{rk} Q_{z'}\Xi_{\tau'} \leq h \}\,.
\end{equation} 
Thus Theorem \ref{thm:rankD} can be rephrased as saying that
\begin{equation}\label{eq:rankDTB}
\theta^h_{\tnull} \cap X_g = \bigcup_{(B,\Xi) \in\mathcal{A}_{g-1}} 2_{B}(\overline{\Sigma}_{h-2}(\gamma_\Xi))\,,
\end{equation} 
where the set on the right can be viewed as the universal second
multiple of the Thom-Boardman locus inside the universal family
$\mathcal{X}_{g-1} \to  \mathcal{A}_{g-1}$. Observe that this is well-defined in $\mathcal{X}_{g-1}$ because of the multiplication by two.

\section{Jacobians with a vanishing theta null in genus five}

Now we turn to the proof of our main theorem. To begin with, we
observe that $\mathcal{J}_g\cap \theta_{\tnull}$ is the image under
the Torelli map of the locus of curves with a vanishing theta null.  It was
proved by Teixidor \cite{TeixidoriThetanull1988} that this is an
irreducible divisor in $\mathcal{M}_g$, and since the Torelli map is
injective, we see that $\mathcal{J}_g\cap \theta_{\tnull}$  is
irreducible of dimension $3g-4$. What we need to prove is that the
irreducible component $\mathcal{Z}_g$ of $\theta^3_{\tnull}$ which
contains $\mathcal{J}_g\cap \theta_{\tnull}$ has the same
dimension. Equivalently, we can compute the codimension inside the
divisor $\theta_{\tnull}$, and we expect that
\begin{equation}
\operatorname{codim}_{\theta_{\tnull}}(\mathcal{Z}_g) \geq \operatorname{codim}_{\theta_{\tnull}}(\mathcal{J}_g\cap\theta_{\tnull}) = \frac{g(g+1)}{2} -1 - (3g-4) = \frac{1}{2}(g-2)(g-3)\,. 
\end{equation} 
We can bound the codimension of $\mathcal{Z}_g$ by considering its intersection with $X_g$. More precisely, we proceed in two steps.
\begin{enumerate}
\item Prove that $\mathcal{J}_g\cap \theta_{\tnull}$ intersects $X_g$
  at a smooth point of $\theta_{\tnull}$. Then $\mathcal{Z}_g$ also
  intersects $X_g$ at a smooth point of $\theta_{\tnull}$, and it is a standard fact that $\operatorname{codim}_{\theta_{\tnull}}\mathcal{Z}_g \geq \operatorname{codim}_{X_g}(\mathcal{Z}_g\cap X_g)$.

\item Show that $\operatorname{codim}_{X_g}(\theta^{3}_{\tnull} \cap
  X_g) \geq \frac{1}{2}(g-2)(g-3)$. Then $\operatorname{codim}_{X_g}
  (\mathcal{Z}_g\cap X_g)\geq \frac{1}{2}(g-2)(g-3)$, and from the
  previous step we have
  $\operatorname{codim}_{\theta_{\tnull}}\mathcal{Z}_g \geq
  \frac{1}{2}(g-2)(g-3)$ as well. 
\end{enumerate}

We proceed to carry out these steps in genus five.

\subsection{Limits of Jacobians in the partial compactification}

Here we consider points in the intersection $\mathcal{J}_g\cap
\partial\mathcal{A}_g$ coming from nodal curves. Let $C$ be a smooth
and irreducible curve of genus $g-1\geq 4$ with two distinct marked points
$p,q\in C$, and let $\overline{C}$ be the nodal curve obtained by identifying these two points. This is a stable curve of arithmetic genus $g$ and the map $\nu\colon C\to \overline{C}$ is the normalization. 

The group $\operatorname{Pic}^0(\overline{C})$ fits naturally into an exact sequence \cite{ACGH2}
\begin{equation}
0 \lra \mathbb{C}^* \lra \operatorname{Pic}^0(\overline{C}) \overset{\nu^*}{\lra} J(C) \lra 0
\end{equation}
where $J(C)=\operatorname{Pic}^0(C)$ is the Jacobian of $C$. This says
that a bundle $\overline{L}$ on $\overline{C}$ is equivalent to a
bundle $L$ on $C$, together with an identification of the two fibers
$L(p),L(q)$, which is given by an element in $\mathbb{C}^*$. Hence we see that $\operatorname{Pic}^0(\overline{C})$ is a rank one extension of the ppav $J(C)$, as in \eqref{eq:rank1}. Moreover, in this situation we also have a natural element in $J(C)$ given by $\mathcal{O}(p-q)$. According to the description of Section \ref{section:partialcompactification}, this gives us an element in $\mathcal{J}_g\cap \partial\mathcal{A}_g$. 

Now we consider nodal curves with a vanishing theta null, following Cornalba's construction \cite[Example 3.2, Example 6.2]{CornalbaModulitheta1989}.

\begin{lemma}\label{lemma:bundleL}
Let $L$ be a line bundle on $C$ such that $2L \sim K_C+p+q$. Then $L$ descends to a line bundle $\overline{L}$ on $\overline{C}$ such that $2\overline{L} \sim K_{\overline{C}}$ and $h^0(\overline{C},\overline{L}) = h^0(C,L)$.
\end{lemma}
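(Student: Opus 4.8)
The plan is to describe line bundles on $\overline{C}$ in the concrete way recalled just above the statement: a line bundle on $\overline{C}$ is the same datum as a line bundle $M$ on $C$ together with a gluing isomorphism $\psi\colon M(p)\xrightarrow{\ \sim\ } M(q)$ of the fibres over the two preimages of the node, two such data $(M,\psi),(M,\psi')$ defining isomorphic bundles if and only if $\psi'=c\,\psi$ for a constant $c\in\CC^*$. Since $\overline{C}$ is nodal it is Gorenstein, so $K_{\overline{C}}$ is a line bundle, and it is classically described this way: $\nu^*K_{\overline{C}}\cong K_C+p+q$, the gluing being the one for which a section --- a meromorphic differential on $C$ with at worst simple poles at $p$ and $q$ --- descends to $\overline{C}$ exactly when its residues at $p$ and $q$ are opposite.

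First I would construct the descent. Fix an isomorphism $\phi\colon 2L\xrightarrow{\ \sim\ } K_C+p+q$ of line bundles on $C$; any two such differ by a constant, so what follows is independent of this choice. Passing to fibres over $p$ and $q$ and composing with the canonical gluing of $K_{\overline{C}}$ produces a distinguished isomorphism $(L(p))^{\otimes 2}\to (L(q))^{\otimes 2}$. Any isomorphism $\psi\colon L(p)\to L(q)$ whose square equals this distinguished map --- there are exactly two, differing by a sign --- defines a line bundle $\overline{L}$ on $\overline{C}$ with $\nu^*\overline{L}\cong L$ and, by construction, $2\overline{L}\sim K_{\overline{C}}$. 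It remains only to fix the sign so that the cohomological statement also holds, noting that both sign choices give $2\overline{L}\sim K_{\overline{C}}$.

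For the dimension count I would use the normalization sequence $0\to\overline{L}\to\nu_*L\to\CC_{\mathrm{node}}\to 0$ on $\overline{C}$, whose long exact sequence in cohomology begins
\[
0\to H^0(\overline{C},\overline{L})\to H^0(C,L)\xrightarrow{\ \delta\ }\CC,\qquad \delta(s)=s(q)-\psi(s(p)),
\]
so that $h^0(\overline{C},\overline{L})=h^0(C,L)$ precisely when $\delta\equiv 0$, i.e.\ when every section of $L$ is compatible with the gluing $\psi$. To see this can be arranged, observe that for $s\in H^0(C,L)$ the square $s^2$ is a section of $2L\cong K_C+p+q$, and the residue theorem on $C$ gives $\operatorname{Res}_p(\phi(s^2))+\operatorname{Res}_q(\phi(s^2))=0$. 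Tracing through the trivializations, this identity says exactly that $\psi^{\otimes 2}(s(p)^{\otimes 2})=s(q)^{\otimes 2}$ in $(L(q))^{\otimes 2}$, hence $\psi(s(p))=\pm s(q)$ with a sign that a priori depends on $s$. However, the image of the linear evaluation map $H^0(C,L)\to L(p)\oplus L(q)$, $s\mapsto(s(p),s(q))$, then lies in the union of the two distinct lines $\{\psi(x)=y\}$ and $\{\psi(x)=-y\}$ through the origin; a linear subspace contained in such a union is contained in one of the two lines, so the sign $\epsilon\in\{\pm1\}$ is the same for all $s$. Replacing $\psi$ by $\epsilon\psi$ --- which leaves $\psi^{\otimes 2}$, and hence $2\overline{L}\sim K_{\overline{C}}$, unchanged --- forces $\delta\equiv 0$, and we conclude $h^0(\overline{C},\overline{L})=h^0(C,\nu^*\overline{L})=h^0(C,L)$.

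The only delicate point is the bookkeeping in the last paragraph: identifying the canonical gluing of $K_{\overline{C}}$, written in residue coordinates near $p$ and $q$, with $\psi^{\otimes 2}$ transported by $\phi$, so that the residue theorem reads literally as the identity $\psi^{\otimes 2}(s(p)^{\otimes 2})=s(q)^{\otimes 2}$. Once this normalization is in place, the descent, the sign-independence of $\epsilon$, and the final count are all formal. (In the degenerate case where $L$ has no section nonvanishing at $p$, the same residue identity shows that every section of $L$ vanishes at both $p$ and $q$, so $\delta\equiv 0$ with no sign correction needed.)
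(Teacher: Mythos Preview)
Your argument is correct, and it reaches the same conclusion by a route that is organized differently from the paper's. The paper first uses Riemann--Roch together with the self-duality $K_C-L\sim L-p-q$ to obtain $h^0(C,L-p-q)=h^0(C,L)-1$, so the evaluation map $H^0(C,L)\to L(p)\oplus L(q)$ has image at most a line; it then observes that $p$ is a base point of $K_C+p-q$ to show that a section nonvanishing at one of $p,q$ is nonvanishing at both, and uses such a section $\sigma_0$ to \emph{define} the gluing by $\sigma_0(p)\leftrightarrow\sigma_0(q)$, deferring the verification that this particular gluing gives $2\overline{L}\sim K_{\overline{C}}$ to Cornalba's paper. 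You reverse the logic: you build the gluing $\psi$ a priori as a square root of the canonical gluing for $K_{\overline{C}}$, so that $2\overline{L}\sim K_{\overline{C}}$ is automatic, and then use the residue theorem on $\phi(s^2)$ together with the linear-algebra observation that a subspace of $\ell_+\cup\ell_-$ lies in a single line to force all sections to descend after a possible global sign flip. The underlying mechanism---squaring sections of $L$ and invoking residues (equivalently, that $p$ is a base point of $K_C+p-q$)---is the same in both proofs; your packaging makes the theta-characteristic property of $\overline{L}$ transparent from the construction, while the paper's packaging makes the dimension count $h^0(\overline{L})=h^0(L)$ a one-line consequence of Riemann--Roch. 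Note also that your argument independently recovers the paper's Riemann--Roch conclusion $\dim\operatorname{im}(\mathrm{ev})\le 1$, since a subspace contained in the union of two lines has dimension at most one.
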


\begin{proof}
We sketch the proof here, referring to \cite{CornalbaModulitheta1989}
for more details. First we observe that $\deg L = g-1$, and the
Riemann-Roch theorem shows that $h^0(C,L-p-q) = h^0(C,L)-1$, so the
linear system of $L$ does not separate the two points $p,q$. Moreover,
this shows that there is a section $\sigma_0\in H^0(C,L)$ that does
not vanish at  either $p$ or $q$; in fact $\sigma_0$ does not vanish
at both points. Indeed, suppose that $\sigma_0(q)=0$ and
$\sigma_0(p)\ne 0$. Then $\sigma_0^2$ is a section of $2L(-q)\sim  K_C
+ p-q$ that does not vanish at $p$, which cannot happen because $p$ is a base point of $K_C+p-q$, since $C$ has positive genus.

Hence, we can identify the two fibers of $L(p)$, $L(q)$ by identifying
$\sigma_0(p)$ with $\sigma_0(q)$. This induces a line bundle
$\overline{L}$ on $\overline{C}$ such that $2\overline{L}
\sim K_{\overline{C}}$, and the sections
$H^0(\overline{C},\overline{L})$ correspond to sections $\sigma \in
H^0(C,L)$ such that $\sigma(p)=\sigma(q)$. Moreover, we know that
$H^0(C,L)$ does not separate $p$, $q$, which means precisely that $H^0(\overline{C},\overline{L}) = H^0(C,L)$.    
\end{proof} 

According to \cite{CornalbaModulitheta1989}, this gives a theta
characteristic on $\overline{C}$, and if $h^0(C,L)=2$, we get that
$h^0(\overline{C},\overline{L})=2$, so this is an even and effective
theta characteristic on $\overline{C}$. This actually gives an element in
$(\mathcal{J}_g\cap\theta_{\tnull})\cap X_g$ as follows.

\begin{lemma}\label{lemma:L2}
Let $L$ be a line bundle on $C$ such that $2L \sim K_C+p+q$ and $h^0(C,L)=2$. Then $(J(C),\mathcal{O}(p-q))$ gives a point in $(\mathcal{J}_g\cap\theta_{\tnull})\cap X_g$.
\end{lemma}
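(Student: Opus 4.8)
Given the discussion preceding the statement (Lemma~\ref{lemma:bundleL} together with Cornalba's construction), we already know that $\overline{C}$ carries an even, effective theta characteristic $\overline{L}$; the content of the lemma is to pin down the resulting point of $\partial\mathcal{A}_g$ as $(J(C),\mathcal{O}(p-q))$ and, more to the point, to show that it lies on the component $X_g$ --- rather than on $p^{-1}(\theta_{\tnull,g-1})$ --- of the boundary divisor $\theta_{\tnull}\cap\partial\mathcal{A}_g$ described by Mumford in Theorem~\ref{thm:boundarynull}. The plan is to exhibit an explicit point of the theta divisor of $J(C)$ whose double is $\mathcal{O}_C(p-q)$; by the description of $X_g$ in~\eqref{eq_Xg} this places $(J(C),\mathcal{O}(p-q))$ in $X_g$, and everything else follows formally.

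First I would record what is available for free. By the discussion preceding Lemma~\ref{lemma:bundleL}, the generalized Jacobian $\operatorname{Pic}^0(\overline{C})$ is a rank-one degeneration of Jacobians whose image under the fibration $p\colon\partial\mathcal{A}_g\to\mathcal{A}_{g-1}$ of Section~\ref{section:partialcompactification} is $(J(C),\Xi)$, with translation parameter $\mathcal{O}_C(p-q)$ in the Kummer fiber $J(C)/\langle \pm 1 \rangle$; in particular $(J(C),\mathcal{O}(p-q))\in\mathcal{J}_g\cap\partial\mathcal{A}_g$. It then suffices to prove $(J(C),\mathcal{O}(p-q))\in X_g$, because Theorem~\ref{thm:boundarynull} gives $X_g\subseteq\theta_{\tnull}\cap\partial\mathcal{A}_g$, so the point automatically lies in $\theta_{\tnull}$ as well.

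To prove membership in $X_g$, I would invoke Riemann's theorem: taking $\Xi$ to be a symmetric theta divisor of $J(C)$, we have $\Xi = W_{g-2}(C)-\kappa$ for a theta characteristic $\kappa$ of $C$, so that $2\kappa\sim K_C$. Multiplication by two on $J(C)$ kills the remaining ambiguities --- different symmetric theta divisors differ by two-torsion translation, and any two theta characteristics differ by two-torsion --- so the subset $2_{J(C)}(\Xi)$ occurring in~\eqref{eq_Xg} is well-defined and equals $\{\, 2D - K_C \mid D\in W_{g-2}(C) \,\}\subseteq J(C)$. Now set $D := L(-q)$, a line bundle of degree $g-2$; from $h^0(C,L)=2$ and the structure sequence $0\to L(-q)\to L\to L|_q\to 0$ we obtain $h^0(C,L(-q))\geq 1$, so $D\in W_{g-2}(C)$, and using $2L\sim K_C+p+q$ we get $2D\sim 2L-2q\sim K_C+p-q$, i.e.\ $2D-K_C\sim\mathcal{O}_C(p-q)$. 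Hence $\mathcal{O}_C(p-q)\in 2_{J(C)}(\Xi)\subseteq X_g$, so $(J(C),\mathcal{O}(p-q))\in X_g$, and combined with the previous paragraph this gives $(J(C),\mathcal{O}(p-q))\in(\mathcal{J}_g\cap\theta_{\tnull})\cap X_g$.

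I expect the one genuinely delicate point to be the bookkeeping in the last paragraph: identifying $2_{J(C)}(\Xi)$ with a concrete translate-independent subset of $J(C)$, where it is essential that $2\kappa\sim K_C$ so that the theta-characteristic shift washes out under multiplication by two. The remaining computation is a one-line Riemann--Roch count. As an alternative, one could instead deduce the location on the $X_g$ component directly from Cornalba's description of the limit theta divisor of $\overline{C}$, but the argument above has the advantage of being self-contained.
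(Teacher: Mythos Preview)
Your proof is correct and follows essentially the same route as the paper: both identify $2_{J(C)}(\Xi)$ with $2W_{g-2}(C)-K_C$ via Riemann's theorem, take $D=L(-q)\in W_{g-2}(C)$, and compute $2D-K_C\sim \mathcal{O}_C(p-q)$. Your write-up is more explicit in separating out the $\mathcal{J}_g$ and $\theta_{\tnull}$ memberships (which the paper leaves implicit from the surrounding discussion and Theorem~\ref{thm:boundarynull}), but the mathematical content is the same.
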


\begin{proof} 
According to the description of Section \ref{section:partialcompactification}, we need to show that $\mathcal{O}(p-q) \in 2\cdot \Theta$, where $\Theta\subseteq J(C)$ is a symmetric theta divisor. Recall that all the symmetric theta divisors on $J(C)$ are of the form $W_{g-2}(C)-\kappa$, where $W_{g-2}(C) = \{ M\in \operatorname{Pic}^{g-2}(C) \,|\, h^0(C,M)>0 \}$, and $\kappa$ is a theta characteristic, hence $2\cdot \Theta = 2\cdot W_{g-2}(C) - K_C$.  Then we observe that $L(-q) \in W_{g-2}(C)$ and moreover $2L(-q)\sim K_C + p-q $, so $p-q \sim 2L(-q)-K_C \in  2\cdot W_{g-2}(C) - K_C$. 
\end{proof} 

Via this construction, we obtain many points in $(\mathcal{J}_g\cap \theta_{\tnull})\cap X_g$ and we can give conditions for these to be smooth points of $\theta_{\tnull}$. This is done in the next two lemmas.

\begin{lemma}\label{lemma:smoothXg}
Let $(B,\Xi)\in \mathcal{A}_{g-1}$ be such  that $\operatorname{Aut}(B,\Xi)= \{ \pm 1 \}$ and let $b\in 2_{B}(\Xi)$ be a point such that
 \begin{enumerate}
 	\item $b$ is not a two-torsion point.
 	\item The set $2_B^{-1}(b) \cap \Xi$ consists precisely of one point $b'$.
 	\item The point $b'$ has multiplicity at most two in $\Xi$.
 \end{enumerate}
  Then $(B,\Xi)$ and $b$ give a smooth point of $X_g$ which is also a smooth point of $\theta_{\tnull}$.   	
\end{lemma}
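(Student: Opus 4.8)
The plan is to analyze the local structure of $\theta_{\tnull}$ near the boundary point determined by $(B,\Xi)$ and $b$, using that near such a point we have the explicit description of $X_g$ from \eqref{eq_Xg} and of $\theta_{\tnull} \cap \partial\mathcal{A}_g$ from Mumford's Theorem~\ref{thm:boundarynull}. First I would work in a neighborhood of the boundary point: via the map $\mathcal{X}_{g-1} \to \partial\mathcal{A}_{g-1}$ and the local coordinates $\tau = \left(\begin{smallmatrix} \tau' & z' \\ z'^t & i\infty\end{smallmatrix}\right)$, the point in question corresponds to $(2z',\tau')$ with $\theta'(z',\tau') = 0$ and $2_B(z') = b$. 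Hypothesis (1) ensures $b$ is a general enough point that we are on the component $X_g$ and not accidentally on $p^{-1}(\theta_{\tnull,g-1})$, and the assumption $\operatorname{Aut}(B,\Xi) = \{\pm 1\}$ guarantees that the map $\mathcal{X}_{g-1} \to \partial\mathcal{A}_{g-1}$ is (étale-)locally the quotient $B \to B/\langle \pm 1\rangle$, which by hypothesis (1) is a local isomorphism at $b'$ since $b'$ is not fixed by $-1$.

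Next I would prove smoothness of $X_g$ at the point. Since $X_g$ is cut out inside $\mathcal{X}_{g-1}$ by the single equation $\theta'(z',\tau') = 0$ (composed with multiplication by two, which is a local isomorphism by (1)), smoothness of $X_g$ at $(2z',\tau')$ is equivalent to the non-vanishing of the differential of $\theta'$ at $(z',\tau')$. Here hypothesis (3) enters: if $b'$ had multiplicity $\geq 3$ in $\Xi$ then all first and second derivatives of $\theta'$ would vanish, but multiplicity at most two allows the point to be either a smooth point of $\Xi$ (where $\nabla_{z'}\theta' \neq 0$, so $d\theta' \neq 0$) or a double point (where the Hessian $Q_{z'}\Xi$ is nonzero, and the heat equation \eqref{eq:heatequation} translates this into non-vanishing of some $\partial\theta'/\partial\tau'_{ij}$, hence $d\theta' \neq 0$ again). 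Either way $X_g$ is smooth at the point. Hypothesis (2), that $2_B^{-1}(b) \cap \Xi = \{b'\}$, ensures that no \emph{other} branch of $\bigcup_{(B,\Xi)} 2_B(\Xi)$ passes through this point, so locally $X_g$ coincides with a single smooth sheet.

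Finally I would upgrade smoothness of $X_g$ at the point to smoothness of $\theta_{\tnull}$ at the point. The divisor $\theta_{\tnull}$ in $\overline{\mathcal{A}}_g^1$ has $\theta_{\tnull}\cap\partial\mathcal{A}_g = X_g \cup p^{-1}(\theta_{\tnull,g-1})$ by Theorem~\ref{thm:boundarynull}. The key point is that $\theta_{\tnull}$ is an irreducible divisor (a hypersurface) in the smooth ambient $\overline{\mathcal{A}}_g^1$, so near a point of its boundary locus its local structure is governed by that of the boundary slice: a local defining equation of $\theta_{\tnull}$ restricts on $\partial\mathcal{A}_g$ to a local defining equation of $\theta_{\tnull}\cap\partial\mathcal{A}_g$, and since at our point this intersection is smooth (being locally just $X_g$, the other component $p^{-1}(\theta_{\tnull,g-1})$ being excluded by hypothesis (1) placing $b$ off the locus $\Xi\subseteq 2_B(\Xi)$ that maps into $\theta_{\tnull,g-1}$), the Zariski tangent space of $\theta_{\tnull}$ at the point has the expected dimension $\tfrac{g(g+1)}{2}-1$, whence $\theta_{\tnull}$ is smooth there. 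I expect the main obstacle to be the bookkeeping in this last step — making precise the claim that $\theta_{\tnull}$ being a reduced, irreducible divisor meeting $\partial\mathcal{A}_g$ transversally (or at least not containing $\partial\mathcal{A}_g$) lets one read off smoothness of $\theta_{\tnull}$ from smoothness of $\theta_{\tnull}\cap\partial\mathcal{A}_g$ — together with verifying carefully that hypothesis (1) really does rule out the component $p^{-1}(\theta_{\tnull,g-1})$ passing through our point.
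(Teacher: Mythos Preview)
Your argument for the smoothness of $X_g$ itself is essentially identical to the paper's: pass to $\mathcal{X}_{g-1}$ using $\operatorname{Aut}(B,\Xi)=\{\pm 1\}$ and hypothesis~(1), use hypothesis~(2) so that the multiplication-by-two map has a single point in the fiber, and then use hypothesis~(3) together with the heat equation to see that the universal theta divisor $\{\theta'(z',\tau')=0\}$ is smooth at $(b',(B,\Xi))$. The paper organizes the proof slightly differently --- it states up front that a smooth point of $X_g$ is automatically a smooth point of $\theta_{\tnull}$ (since $X_g$ is a component of the intersection of the two divisors $\theta_{\tnull}$ and $\partial\mathcal{A}_g$) and then spends the rest of the proof on $X_g$ --- but the substance is the same.

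There is, however, a genuine error in your handling of the second boundary component. You assert twice that hypothesis~(1) (that $b$ is not two-torsion) keeps the point off $p^{-1}(\theta_{\tnull,g-1})$, describing the latter as ``the locus $\Xi\subseteq 2_B(\Xi)$ that maps into $\theta_{\tnull,g-1}$.'' This is not what $p^{-1}(\theta_{\tnull,g-1})$ is: it is the full preimage under the projection $p\colon\partial\mathcal{A}_g\to\mathcal{A}_{g-1}$ of the divisor $\theta_{\tnull,g-1}\subseteq\mathcal{A}_{g-1}$, so a point $(b,(B,\Xi))$ lies on it if and only if $(B,\Xi)\in\theta_{\tnull,g-1}$, regardless of $b$. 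Hypothesis~(1) has nothing to do with this. The actual role of hypothesis~(1), as in the paper, is only to ensure that $b\neq -b$ so that $\mathcal{X}_{g-1}\to\partial\mathcal{A}_g$ is a local isomorphism near $(b,(B,\Xi))$. Your concern about the other component is legitimate for the transversality argument you sketch, but the lemma as stated does not exclude $(B,\Xi)\in\theta_{\tnull,g-1}$; in the paper's application (Proposition~\ref{prop:smoothpoint5}) the curve $C$ is taken general, so $J(C)\notin\theta_{\tnull,4}$ and the issue does not arise.
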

\begin{proof}
First observe that $X_g$ is a component of the intersection of the two divisors $\theta_{\tnull}$ and $\partial\mathcal{A}_{g}$, so a smooth point in $X_g$ is smooth in both $\theta_{\tnull}$ and $\partial\mathcal{A}_{g}$. Hence it is enough to prove smoothness in $X_g$.

To do so, we look at the point $(b,(B,\Xi))$ in the universal abelian
variety $\mathcal{X}_{g-1}$; this gives a point in
$\partial\mathcal{A}_{g}$ via the map $\mathcal{X}_{g-1}\to
\partial\mathcal{A}_{g}$ of \eqref{eq:maptoboundary}. Since we are
assuming that $(B,\Xi)$ has no extra automorphisms and that $b$ is not
a two-torsion point, we see that the map is a local isomorphism around
$(b,(B,\Xi))$, hence we can work in $\mathcal{X}_{g-1}$ instead of in
$\partial\mathcal{A}_{g}$. Moreover, the fact that $(B,\Xi)$ has no
extra automorphisms also tells us that the map
$\mathcal{X}_{g}(4,8)\to\mathcal{ X}_g$ is a local isomorphism around
$(b,(B,\Xi))$. So we can work directly inside $\mathcal{X}_g(4,8)$,
where we can look at $X_g = \bigcup_{(B,\Xi)} 2_B(\Xi)$ as the image
of the universal theta divisor $\Xi_{g-1} = \bigcup_{(B,\Xi)} \Xi$
under the global multiplication-by-two map $2\colon
\mathcal{X}_{g-1}(4,8) \to \mathcal{X}_{g-1}(4,8)$. 

Now, our second assumption on $b$ shows that the fiber of this map
over $(b,(B,\Xi))$ consists of a single point $(b',(B,\Xi))$. We also
know that the differential of the multiplication-by-$2$ map is just
the usual scalar multiplication by $2$, so it is an isomorphism. Hence,
the tangent space to $X_g$ at $(b,(B,\Xi))$ is isomorphic to the
tangent space to the universal theta divisor $\Xi_{g-1}$ at
$(b',(B,\Xi))$. In local coordinates $(z',\tau')$, the universal theta
divisor is given by $\{\theta(z',\tau')=0\}$, and the heat equation
shows that a point $(z',\tau')$ is singular if and only if $z'$ is a singular point of $\Xi_{\tau'}$ of multiplicity at least $3$, which in our case is ruled out by the third assumption. 
\end{proof}

\begin{lemma}\label{lemma:smoothpoint}
Let $C$ be a smooth curve of genus $g-1\geq 4$ with only trivial automorphisms and let $D\in C_{g-3}$ be an effective divisor such that
\begin{enumerate}
\item $h^0(C,D)=1$.\label{eq:smoothpointcondition1}
\item $K_C-2D\sim p+q$ with $p,q$ two distinct points on $C$. \label{eq:smoothpointcondition2}
\item $h^0(C,\eta+D)=0$ for each $\eta\in\operatorname{Pic}^0(C)[2] \setminus \{\mathcal{O}_C\}$. \label{eq:smoothpointcondition3}
\end{enumerate}
Then $J(C)$ together with the point $\mathcal{O}(p-q)$ give a point in $(\mathcal{J}_g\cap\theta_{\tnull})\cap X_g$ which is a smooth point of $\theta_{\tnull}$.
\end{lemma}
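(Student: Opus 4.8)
The plan is to reduce Lemma~\ref{lemma:smoothpoint} to Lemma~\ref{lemma:smoothXg} by taking $(B,\Xi) = (J(C),\Theta)$, where $\Theta = W_{g-2}(C) - \kappa$ is a symmetric theta divisor, and producing from the divisor $D$ an explicit point $b' \in \Xi$ with $2_B(b') = \mathcal{O}(p-q)$, then verifying the three hypotheses of Lemma~\ref{lemma:smoothXg}. First I would set $L := D + p + q$ (so that, using condition~\ref{eq:smoothpointcondition2}, $2L \sim 2D + 2p + 2q$, and comparing with $K_C = 2D + p + q$ gives $2L \sim K_C + p + q$); condition~\ref{eq:smoothpointcondition1} together with $K_C - 2D \sim p+q$ and Riemann--Roch should give $h^0(C,L) = 2$, so that we are in the situation of Lemma~\ref{lemma:L2} and Lemma~\ref{lemma:bundleL}: this already places $(J(C),\mathcal{O}(p-q))$ in $(\mathcal{J}_g \cap \theta_{\tnull}) \cap X_g$. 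Concretely, as in the proof of Lemma~\ref{lemma:L2}, the point $b' := L(-q) - \kappa = D + p - \kappa \in \operatorname{Pic}^0(C)$ lies in $\Xi = W_{g-2}(C) - \kappa$ and satisfies $2b' = 2L(-q) - K_C \sim (K_C + p - q) - K_C = p - q$, i.e.\ $2_B(b') = \mathcal{O}(p-q)$.

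Next I would check the three numbered conditions of Lemma~\ref{lemma:smoothXg}. For (1), that $b = \mathcal{O}(p-q)$ is not two-torsion: if $2(p-q) \sim 0$ then $p-q$ is two-torsion, and since $p \ne q$ this is a nonzero two-torsion point $\eta$; but then $\eta + D \sim p - q + D$, and I would argue (perhaps by a base-point/effectivity argument as in Lemma~\ref{lemma:bundleL}, or directly) that this contradicts condition~\ref{eq:smoothpointcondition3}, which forces $h^0(C,\eta + D) = 0$ for all nonzero two-torsion $\eta$ --- actually the cleanest route is: $2(p-q)\sim 0$ would make $L(-q) = D+p$ and its translate by $\eta$ coincide up to the theta characteristic, and one plays this against condition~\ref{eq:smoothpointcondition3} to get a section, a contradiction. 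For (2), that $2_B^{-1}(b) \cap \Xi = \{b'\}$: the fiber of multiplication-by-two through $b'$ consists of $b' + \eta$ for $\eta \in \operatorname{Pic}^0(C)[2]$, and $b' + \eta \in \Xi = W_{g-2}(C) - \kappa$ means $\eta + D + p \in W_{g-2}(C)$, i.e.\ $h^0(C, \eta + D + p) \ge 1$; for $\eta \ne 0$ condition~\ref{eq:smoothpointcondition3} gives $h^0(C,\eta+D) = 0$, and I would upgrade this to $h^0(C,\eta+D+p) = 0$ (the extra point $p$ cannot create a section when even $h^0$ of $\eta + D$ vanishes, by the base-point-free pencil / Riemann--Roch argument, using $\deg(\eta+D) = g-3$). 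Hence $b'$ is the only preimage. For (3), that $b'$ has multiplicity at most two in $\Xi$: by Riemann's singularity theorem $\operatorname{mult}_{b'} \Xi = h^0(C, b' + \kappa) = h^0(C, D + p)$, and condition~\ref{eq:smoothpointcondition1} plus the fact that $p$ is a base point of $|K_C - 2D| = |p+q|$ logic (as in Lemma~\ref{lemma:bundleL}) gives $h^0(C, D+p) = h^0(C,D) = 1$ or at worst $2$; in any case $\le 2$. Finally, $\operatorname{Aut}(J(C),\Theta) = \{\pm 1\}$ follows from the Torelli theorem and the hypothesis that $C$ has only trivial automorphisms.

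Having verified all hypotheses, Lemma~\ref{lemma:smoothXg} immediately gives that $(B,\Xi)$ and $b = \mathcal{O}(p-q)$ determine a smooth point of $X_g$, hence of $\theta_{\tnull}$, which is exactly the conclusion. The step I expect to be the main obstacle is the precise bookkeeping in conditions (2) and (3): one needs to pass from the hypothesis $h^0(C,\eta+D) = 0$ for a degree-$(g-3)$ bundle to the vanishing (resp.\ multiplicity-one) statement for $\eta + D + p$ of degree $g-2$, and this requires a careful Riemann--Roch and base-point analysis rather than a formal manipulation --- in particular one must rule out the jump $h^0(C,\eta+D+p) = 1$ while $h^0(C,\eta+D) = 0$, which genuinely uses that we are adding a \emph{single} general-type point $p$ coming from the relation $K_C - 2D \sim p + q$, and the analogous base-point argument already appearing in Lemma~\ref{lemma:bundleL}. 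A secondary subtlety is checking that $L = D+p+q$ really has $h^0 = 2$ (and not more), since if $h^0(C,L) \ge 3$ the theta characteristic $\overline{L}$ on $\overline{C}$ would fail to be an \emph{even} vanishing theta null of the expected type; this should follow from Clifford's theorem or directly from $h^0(C,D) = 1$, but it is worth stating explicitly.
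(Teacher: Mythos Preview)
Your overall strategy is exactly the paper's: set $L = D + p + q$, check $2L \sim K_C + p + q$ and $h^0(C,L)=2$, take $b' = L(-q)-\kappa$, and verify the three hypotheses of Lemma~\ref{lemma:smoothXg}. Two of your checks are more awkward than necessary, though, and one of them hides the real point.

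For condition~(1) you try to route through hypothesis~\eqref{eq:smoothpointcondition3}, but the paper's argument is one line: if $\mathcal{O}(p-q)$ were two-torsion then $2p \sim 2q$ with $p \ne q$, so $|2p|$ is a $g^1_2$ and $C$ is hyperelliptic, contradicting the assumption that $\operatorname{Aut}(C)$ is trivial (a hyperelliptic curve of genus $\ge 2$ has its involution). No need to invoke the third hypothesis at all.

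For condition~(2), the step you flag as the ``main obstacle'' --- ruling out $h^0(C,\eta+D+p)=1$ when $h^0(C,\eta+D)=0$ --- is \emph{not} automatic (your phrase ``the extra point $p$ cannot create a section'' is false in general). The clean resolution, which you almost reach, is to note that for \emph{every} $\eta \in \operatorname{Pic}^0(C)[2]$ one has $2(L+\eta) \sim 2L \sim K_C + p + q$, so the entire argument of Lemma~\ref{lemma:bundleL} applies to $L+\eta$ verbatim: Riemann--Roch gives $h^0(L+\eta) = h^0(L+\eta - p - q) + 1 = h^0(D+\eta)+1$, and the base-point trick (squaring a section and landing in $|K_C+p-q|$) shows that neither $p$ nor $q$ is a base point of $|L+\eta|$. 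Hence $h^0(D+p+\eta) = h^0(L+\eta-q) = h^0(L+\eta)-1 = h^0(D+\eta)$, which vanishes for $\eta \ne 0$ by hypothesis~\eqref{eq:smoothpointcondition3}. This is the paper's computation; once you see that Lemma~\ref{lemma:bundleL} applies to $L+\eta$ and not just to $L$, the obstacle evaporates.

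For condition~(3) your bound $h^0(D+p)\le 2$ already suffices for Lemma~\ref{lemma:smoothXg}; the same base-point argument in fact gives $h^0(D+p)=h^0(L-q)=h^0(L)-1=1$, so the multiplicity is exactly one.
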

\begin{proof}
First we observe that $\deg(K_C-2D) = 2(g-1)-2-2(g-3) = 2$, and since the curve is not hyperelliptic, this implies $h^0(C,K_C-2D)\leq 1$, so the points $p,q$ in \eqref{eq:smoothpointcondition2} are uniquely determined. 	

We then apply Lemma \ref{lemma:smoothXg}. Since the automorphism group
of $C$ is trivial, Torelli's theorem implies that $J(C)$ has no extra
automorphisms. Moreover, if $\kappa$ is any theta characteristic on
$C$, we can fix the symmetric theta divisor given by
$W_{g-2}(C)-\kappa$, and twice the theta divisor is
$2W_{g-2}(C)-K_C$. Then $\mathcal{O}(p-q) \in 2W_{g-2}(C)-K_C$,
since if we set  $L = \mathcal{O}(D+p+q)$, then $2L\sim K_C+p+q$
because of assumption \eqref{eq:smoothpointcondition2}. Reasoning as in the proof of Lemma \ref{lemma:bundleL}, we get that $h^0(C,L) = h^0(C,D)+1 = 2$, and also $h^0(C,L(-p)) = h^0(C,L(-q)) = h^0(C,L)-1=1$. Hence $p-q \sim 2L(-q) - K_C$, with $L(-q)\in W_{g-2}(C)$.

Now we check the three conditions of Lemma \ref{lemma:smoothXg}.  We see that $\mathcal{O}(p-q)$ cannot be a two-torsion point, because otherwise $2p\sim 2q$, but $p,q$ are distinct and $C$ is not hyperelliptic. For the second condition, since $p-q \sim 2L(-q)-K_C$, we see that 
\begin{equation}\label{eq:intersection}
2_{J(C)}^{-1}(p-q)\cap (W_{g-2}(C)-\kappa)\cong \{ L(-q)+\eta \,|\, \eta \in J(C)[2],\, h^0(L+\eta-q) > 0 \}\,.
\end{equation}
For any $\eta\in\operatorname{Pic}^0(C)[2]$, we see that $2(L+\eta) \sim 2L \sim K_C+p+q$, hence from the proof of Lemma \ref{lemma:bundleL}, $h^0(C,L+\eta-q)=h^0(C,L+\eta)-1 = h^0(C,L+\eta-p-q) = h^0(C,D+\eta)$. Thus assumption \eqref{eq:smoothpointcondition3} implies that the intersection \eqref{eq:intersection} consists of the unique point $L(-q)$.

For the last condition, we need to check that $L(-q)$ has multiplicity at most two in the theta divisor $W_{g-2}(C)$. But Riemann's singularity theorem shows that $\operatorname{mult}_{L(-q)} W_{g-2}(C) = h^0(C,L-q) = h^0(C,L)$, and $h^0(C,L)=1$ by assumption \eqref{eq:smoothpointcondition1}. 
\end{proof}

We now specialize this discussion to $g=5$.

\begin{proposition}\label{prop:smoothpoint5}
	$\mathcal{J}_5\cap \theta_{\tnull}$ and $X_5$ intersect at a smooth point of $\theta_{\tnull}$.
\end{proposition}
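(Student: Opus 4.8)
The plan is to exhibit a single smooth curve $C$ of genus four satisfying the three hypotheses of Lemma~\ref{lemma:smoothpoint} with $g=5$; then Lemma~\ref{lemma:smoothpoint} immediately produces a point of $(\mathcal{J}_5\cap\theta_{\tnull})\cap X_5$ that is smooth on $\theta_{\tnull}$, which is exactly the assertion. So I reduce everything to the following: find $C$ of genus $4$ with only trivial automorphisms, together with an effective divisor $D\in C_{2}$ (here $g-3=2$) such that (i) $h^0(C,D)=1$, (ii) $K_C-2D\sim p+q$ with $p\ne q$, and (iii) $h^0(C,\eta+D)=0$ for every nonzero $2$-torsion point $\eta$.

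The natural strategy is a dimension count / general-position argument on $\mathcal{M}_4$. A general genus four curve $C$ is non-hyperelliptic, has trivial automorphism group, and is canonically embedded as the complete intersection of a quadric and a cubic in $\mathbb{P}^3$; its $g^1_3$'s (two of them, from the two rulings of the quadric, or one if the quadric is singular) are well understood. First I would note that a general $D\in C_2$ has $h^0(C,D)=1$ automatically, since $C$ is not hyperelliptic, so (i) holds on a dense open subset of $C_2$. For (ii): the map $C_2\to\operatorname{Pic}^4(C)$, $D\mapsto K_C-2D$, combined with the requirement that the resulting degree-$2$ divisor class be effective and reduced, cuts out a subvariety of $C_2$; I must check this subvariety is nonempty and meets the open locus where (i) and (iii) hold. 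Concretely, $K_C-2D$ effective of degree $2$ means $K_C-2D\sim p+q$, i.e. $2D+p+q\sim K_C$; equivalently, fixing a theta characteristic is not needed — one can instead start from a point $L\in\operatorname{Pic}^3(C)$ with $h^0(C,L)=2$ (an even vanishing theta null already exists on special genus four curves, but here we only need $2L\sim K_C+p+q$, which is a codimension-one condition) and set $D$ so that $L=\mathcal{O}(D+p+q)$; the cleanest route is to parametrize by $(D,p,q)$ with $2D+p+q\in |K_C|$ and show this locus is nonempty of the expected dimension for general $C$.

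For condition (iii), the point is that for fixed $C$ there are only $2^{2g-2}-1=2^6-1=63$ nonzero $2$-torsion points, and for each such $\eta$ the locus $\{D\in C_2 : h^0(C,\eta+D)>0\}$ is the image of the addition map $C\to\operatorname{Pic}^2(C)$ translated by $-\eta$, i.e. a curve (one-dimensional) inside the surface $C_2$ (using that $\eta+D$ effective of degree $2$ with $C$ non-hyperelliptic forces $\eta+D\sim x+y$ for a unique unordered pair, so this bad locus is a copy of $C_2$'s diagonal-type subvariety of dimension $\le 1$). Hence the union over all $63$ values of $\eta$ of these bad loci is a proper closed subset of $C_2$, and I must arrange the divisor $D$ coming from condition (ii) to avoid it. The honest way to do this is to vary $C$ as well: consider the universal picture over (an open subset of) $\mathcal{M}_4$, let $\mathcal{S}\to\mathcal{M}_4$ be the family whose fiber over $C$ is $\{D\in C_2: 2D\sim K_C-p-q,\ p\ne q\}$, and let $\mathcal{B}\subseteq\mathcal{S}$ be the closed sublocus where (i) or (iii) fails; then I show $\mathcal{B}\ne\mathcal{S}$ by producing one $(C,D)$ outside $\mathcal{B}$, which by semicontinuity reduces to an explicit example or to a tangent-space / deformation computation showing the $63$ bad conditions are independent enough not to force (iii) to fail identically on $\mathcal{S}$.

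The main obstacle I anticipate is exactly condition (iii): it is not an open condition but the complement of finitely many closed conditions, and a priori one of these closed conditions could contain all of $\mathcal{S}$ (for instance if the construction forced some specific $\eta+D$ to be effective). I expect the resolution to be either (a) an explicit genus four curve — e.g. a suitably general smooth $(2,3)$ complete intersection in $\mathbb{P}^3$ with a chosen pair of points $p,q$ — where one checks (i)--(iii) by direct computation (this is also consistent with the paper's computational flavor and Example~\ref{example:computations}), or (b) a clean geometric argument that the image of $\mathcal{S}$ in $\mathcal{A}_4$ is not contained in any translate of a theta divisor by a nonzero $2$-torsion point, using irreducibility of the relevant moduli and the fact that $\operatorname{Aut}$ acts transitively on the $63$ nonzero $2$-torsion points up to monodromy. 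Either way, once a single good $(C,D,p,q)$ is in hand, the proposition follows verbatim from Lemma~\ref{lemma:smoothpoint}.
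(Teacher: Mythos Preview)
Your overall approach coincides with the paper's: both reduce to Lemma~\ref{lemma:smoothpoint} for a general genus four curve, observe that condition~(i) is automatic for non-hyperelliptic $C$, and identify condition~(iii) as the crux. What you have written, however, is a plan rather than a proof, and the steps you leave open are exactly the ones that carry the content.

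For condition~(ii), the paper does not merely assert nonemptiness: it shows that $Z=\{D\in C_2:h^0(K_C-2D)>0\}$ is the degeneracy locus of a map of rank four bundles on $C_2$, checks $Z\ne\emptyset$ and $Z\ne C_2$ directly, hence $\dim Z=1$; then it rules out $K_C-2D\sim 2p$ for general $D\in Z$ by noting this forces $D+p$ to be an effective theta characteristic, of which there are finitely many.

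For condition~(iii), your claim that each bad locus $\{D:h^0(\eta+D)>0\}$ is ``a curve (one-dimensional)'' is not justified and is not what the paper uses. In $\operatorname{Pic}^2(C)$ this locus is $C_2\cap(C_2+\eta)$, an intersection of two surfaces in a fourfold; its expected dimension is zero, and the real question is whether the one-dimensional $Z$ can sit inside it. The paper argues by contradiction: if some component $Z'\subseteq Z$ lies in $C_2\cap(C_2+\eta)$, then $\dim\bigl(C_2\cap(C_2+\eta)\bigr)\geq 1$, and by \cite[Exercise V.D-4]{ACGH} the difference map $C_2\times C_2\to\operatorname{Pic}^0(C)$ has a positive-dimensional fiber at $\eta$ only if $\eta\sim\mathcal{O}(x-y)$ or $\eta\sim 2M_1-K_C$ with $M_1$ a $\mathfrak{g}^1_3$. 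The first is impossible for nonzero two-torsion on a non-hyperelliptic curve; the second is disposed of by an explicit Macaulay2 check on a specific $(2,3)$ complete intersection showing $4M_1\not\sim 2K_C$. So your option~(a) is in spirit what happens, but only for the residual case after a genuine geometric reduction; neither of your proposed resolutions is carried out.
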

\begin{proof}
We follow Lemma \ref{lemma:smoothpoint}. Let $C$ be a general curve of
genus $4$, with only trivial automorphisms. We consider the locus 
\begin{equation} 
Z=\{ D\in  C_2 \,|\, h^0(C,K_C-2D)>0 \}
\end{equation}
and we show that a general element in $Z$ satisfies the three conditions of Lemma \ref{lemma:smoothpoint}.
First, we compute the dimension of $Z$. The condition $h^0(C,K_C-2D)>0$ is equivalent to the fact that the evaluation map $\operatorname{ev}_{D}\colon H^0(C,K_C)\to H^0(C,K_C\otimes \mathcal{O}_{2D})$ is not injective, or not an isomorphism, since $h^0(C,K_C) = h^0(C,K_C\otimes \mathcal{O}_{2D}) = 4$.   
The evaluation map globalizes to a map of rank four bundles
$\operatorname{ev}\colon H^0(C,K_C)\otimes \mathcal{O}_{C_2} \to E$ on
$C_2$, where the fiber of $E$ at $D\in C_2$ is precisely
$H^0(C,K_C\otimes \mathcal{O}_{2D})$. Thus, $Z$ is precisely the
degeneracy locus of the global evaluation map. In particular, if we
can show that $Z\ne\emptyset$ and $Z\ne C_2$, it will follow that $Z$ is a divisor in $C_2$. To prove this, let $x\in C$ be a general point, then $h^0(C,K_C-2x) = 2$, so after removing the eventual base locus $B$, we get a map $\varphi\colon C\to \mathbb{P}^1$. If $y$ is a point where $\varphi$ is ramified, we see that $D=x+y\in Z$; if instead $y'$ is a point outside of $B$ where $\varphi$ is not ramified, then $D'= x+y'\notin Z$.

Now we check the conditions in Lemma \ref{lemma:smoothpoint}. If $D\in
Z$, then $h^0(C,D)=1$, otherwise the curve would be hyperelliptic. For the second condition, suppose that $K_C-2D \sim 2p$ for some $p\in C$. Then $K_C \sim 2D+2p$, so $\mathcal{O}_C(D+p)$ is a theta characteristic on $C$. There are only finitely many effective theta characteristics $\kappa$ and since $C$ is general, they all satisfy $h^0(C,\kappa)=1$. Hence there are only finitely many such $D$, and since $\dim Z=1$, a general $D$ in any component of $Z$ will give $K_C-2D \sim p+q$ for two distinct points $p,q\in C$.

For the third condition, if we look at $C_2$ in
$\operatorname{Pic}^2(C)$ via the Abel-Jacobi map, we see that $D\in
Z$ satisfies the third condition precisely when $D\notin
\bigcup_{\eta}(C_2+\eta)$, as $\eta$ varies in $J(C)[2]\setminus
\{\mathcal{O}_C\}$. Suppose by contradiction that $Z\subseteq
\bigcup_{\eta}(C_2+\eta)$ and let $Z'\subseteq Z$ be an irreducible
component. Then there must be an $\eta\in J(C)[2]\setminus \{
\mathcal{O}_C\}$ such that $Z'\subseteq C_2+\eta$, so that
$Z'\subseteq C_2\cap(C_2+\eta)$. In particular $\dim
(C_2\cap(C_2+\eta))\geq 1$, and we will now prove that this cannot happen. To do so, observe that if $\dim (C_2\cap(C_2+\eta))\geq 1$, then the difference map
\begin{equation}
\phi_2\colon C_2\times C_2 \lra \operatorname{Pic}^0(C)\,, \qquad (D,E)\mapsto D-E
\end{equation} 
has a positive-dimensional fiber at $\eta$, and by \cite[Exercise
  V.D-4]{ACGH} this is possible only if $\eta \cong \mathcal{O}(x-y)$
for two distinct points $x,y\in C$, or if $\eta \cong M_1-M_2 \cong
2M_1-K_C$, where $M_1$ is one $\mathfrak{g}^{1}_3$ on $C$ and $M_2 =
K_C-M_1$ is the other. The first possibility cannot happen because
then $2x\sim 2y$, and $C$ is not hyperelliptic. For the second
possibility, observe that if $2M_1-K_C$ is two-torsion, then $4M_1\sim
2K_C$. Since $C$ is not hyperelliptic, we know from Noether's theorem
that it is projectively normal in the canonical embedding $C\subseteq
\mathbb{P}^{3}$, hence $4M_1\sim 2K_C$ if and only if for every divisor $H\in |M_1|$, the divisor $4H$ is cut out by a quadric in $\mathbb{P}^3$. Using Macaulay2 \cite{M2}, we can easily find a curve where this does not happen, for example the curve $C = \{ X_0X_3-X_1X_2 = X_0^3+X_0^2X_1+X_1^3+X_2^3+X_3^3 = 0 \}$ and the divisor $H=C\cap\{X_2=X_3=0\}$.
\end{proof}

\subsection{Thom-Boardman loci for the Gauss map in dimension 4}

Now  we need to bound the codimension of $\theta^{3}_{\tnull} \cap X_g$ inside $X_g$, and by \eqref{eq:rankDTB}, we can do it by studying the Thom-Boardman loci in one dimension less. 

\begin{lemma}\label{lemma:boundcodimgeneral}
Suppose that 
\begin{equation}
\operatorname{codim}_{\mathcal{A}_{g-1}} \{ (B,\Xi) \,|\, \dim \overline{\Sigma}_1(\gamma_{\Xi}) \geq i \} \geq i + \frac{(g-2)(g-5)}{2}\,, \qquad \text{ for } i=0,\ldots,g-2\,.
\end{equation}
Then
\begin{equation}
\operatorname{codim}_{X_g}(\theta^{3}_{\tnull} \cap X_g) \geq \frac{1}{2}(g-2)(g-3)\,.
\end{equation}
\begin{proof}
We know from \eqref{eq:rankDTB} that $\theta^3_{\tnull}\cap X_g = \bigcup_{(B,\Xi)} 2_B(\overline{\Sigma}_1(\gamma_{\Xi}))$, and we can bound the dimension of this by bounding the dimension of the fibers and of the image along the projection $p\colon \theta^3_{\tnull}\cap X_g \to \mathcal{A}_{g-1}$. More precisely we have
\begin{equation}
\dim \theta^3_{\tnull}\cap X_g \leq \max_{i=0,\ldots,g-2} \left( i + \dim \{ (B,\Xi) \in \mathcal{A}_{g-1} \,|\, \dim 2_B(\overline{\Sigma}_1(\gamma_{\Xi})) \geq i \} \right)\,.
\end{equation}
Since the multiplication $2_B\colon B\to B$ is a finite map, we can replace $\dim 2_B(\overline{\Sigma}_1(\gamma_{\Xi})) \geq i$ by $\dim \overline{\Sigma}_1(\gamma_{\Xi})\geq i$, and if we rephrase the previous inequality in terms of the codimension, we get exactly what we want.  	
\end{proof}
\end{lemma}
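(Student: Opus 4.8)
The plan is to read off $\theta^{3}_{\tnull}\cap X_g$ from the description \eqref{eq:rankDTB} as the universal second multiple of the Thom--Boardman locus $\overline{\Sigma}_{1}$ over $\mathcal{A}_{g-1}$ (the index being $h-2=1$ for $h=3$), and then to estimate its dimension by a standard fibre-dimension argument along the projection to the base $\mathcal{A}_{g-1}$, finally re-expressing the outcome as a codimension statement inside $X_g$.

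Concretely, I would first record that $X_g$ is the image of the universal theta divisor inside $\mathcal{X}_{g-1}$ under the fibrewise multiplication-by-two map, so it is irreducible with $\dim X_g = \dim\mathcal{A}_{g-1}+(g-2)$ and the projection $p\colon X_g\to\mathcal{A}_{g-1}$ has all fibres of dimension $g-2$. By \eqref{eq:rankDTB}, the fibre of the restricted projection $p\colon\theta^{3}_{\tnull}\cap X_g\to\mathcal{A}_{g-1}$ over $(B,\Xi)$ is $2_B(\overline{\Sigma}_1(\gamma_{\Xi}))$. Since $2_B\colon B\to B$ is a finite morphism, it preserves dimensions of subvarieties, so this fibre has dimension $\dim\overline{\Sigma}_1(\gamma_{\Xi})$; hence the locus of $(B,\Xi)$ over which the fibre has dimension at least $i$ is exactly $S_i:=\{(B,\Xi)\mid\dim\overline{\Sigma}_1(\gamma_{\Xi})\geq i\}$. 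The theorem on the dimension of fibres then yields
\[
\dim(\theta^{3}_{\tnull}\cap X_g)\ \leq\ \max_{i=0,\ldots,g-2}\bigl(i+\dim S_i\bigr)\,.
\]
Subtracting from $\dim X_g=\dim\mathcal{A}_{g-1}+(g-2)$ gives
\[
\operatorname{codim}_{X_g}(\theta^{3}_{\tnull}\cap X_g)\ \geq\ \min_{i=0,\ldots,g-2}\bigl(\operatorname{codim}_{\mathcal{A}_{g-1}}S_i+(g-2-i)\bigr)\,,
\]
and plugging in the hypothesis $\operatorname{codim}_{\mathcal{A}_{g-1}}S_i\geq i+\tfrac{(g-2)(g-5)}{2}$ makes every term of the minimum at least $\tfrac{(g-2)(g-5)}{2}+(g-2)=\tfrac12(g-2)(g-3)$, which is the claimed bound.

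The points requiring care are essentially formal bookkeeping: one must check that the loci $\overline{\Sigma}_1(\gamma_{\Xi})$ fit together into a closed family over $\mathcal{A}_{g-1}$ (which follows from Theorem~\ref{thm:rankD}, since $\theta^{3}_{\tnull}\cap X_g$ is closed), so that the stratification by the $S_i$ is legitimate and the fibre-dimension theorem applies; and one should keep in mind the usual passage to the finite cover $\mathcal{A}_g(4,8)$ and to the partial compactification, so that $X_g$ and the multiplication-by-two map are globally defined and the dimension counts transfer back down to $\overline{\mathcal{A}}_g^{1}$. None of this is the real difficulty. The hard part is not in this lemma at all: it is the hypothesis itself, namely bounding the codimension in $\mathcal{A}_{g-1}$ of the Gauss-ramification loci $S_i$, which is what the next subsection must establish for $g=5$.
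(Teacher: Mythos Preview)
Your argument is correct and follows essentially the same route as the paper: identify $\theta^3_{\tnull}\cap X_g$ via \eqref{eq:rankDTB}, stratify the base $\mathcal{A}_{g-1}$ by the fibre dimension of $\overline{\Sigma}_1(\gamma_\Xi)$, use finiteness of $2_B$ to replace $2_B(\overline{\Sigma}_1)$ by $\overline{\Sigma}_1$, and translate the resulting dimension bound into codimension. Your write-up is in fact a bit more explicit than the paper's (you spell out $\dim X_g$ and the arithmetic $\tfrac{(g-2)(g-5)}{2}+(g-2)=\tfrac12(g-2)(g-3)$), but the content is identical.
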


\begin{remark}\label{rmk:alwaysbadpoints}
When $g>5$, Lemma \ref{lemma:boundcodimgeneral} requires that the set
\begin{equation}
\{ (B,\Xi) \in \mathcal{A}_{g-1} \,|\, \dim \overline{\Sigma}_1(\gamma_{\Xi}) \geq 0  \} = \{ (B,\Xi) \in \mathcal{A}_{g-1} \,|\, \overline{\Sigma}_1(\gamma_{\Xi}) \ne \emptyset  \}
\end{equation}
is a proper subset of $\mathcal{A}_{g-1}$. However, this cannot
happen; indeed, if $(B,\Xi)\in \mathcal{A}_{g-1}$ is a general ppav,
then $\Xi$ contains $2^{g-1}(2^{g}-1)$ odd two-torsion points,
corresponding to the odd characteristics. If $\Xi$ is cut out by a
theta function $\Xi = \{\theta'(z',\tau')=0\}$ and $z'$ is an odd
two-torsion point, then one can see that $\operatorname{rk} D\gamma(z',\tau') \leq 2$, so $z'\in \Sigma_1(\gamma_{\Xi})$.  
\end{remark}

However, when $g=5$ the hypotheses of Lemma~\ref{lemma:boundcodimgeneral} are satisfied.

\begin{proposition}\label{prop:boundcodim5}
It holds that
\begin{equation}
\operatorname{codim}_{X_5} (\theta^3_{\tnull} \cap X_5) \geq 3\,.
\end{equation}
\end{proposition}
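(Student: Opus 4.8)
The plan is to apply Lemma~\ref{lemma:boundcodimgeneral} with $g=5$: since $(g-2)(g-5)/2=0$ in this case, it suffices to prove that
\begin{equation*}
\operatorname{codim}_{\mathcal{A}_4}\bigl\{\,(B,\Xi)\mid\dim\overline{\Sigma}_1(\gamma_\Xi)\geq i\,\bigr\}\ \geq\ i\,,\qquad i=0,1,2,3\,,
\end{equation*}
where $\gamma_\Xi$ is the Gauss map of a $4$-dimensional theta divisor, a rational map from a threefold to $\mathbb{P}^3$. By Theorem~\ref{thm:rankD} and \eqref{eq:rankDTB}, the universal locus $\mathcal{V}=\{\,((B,\Xi),z)\mid\theta'(z,\tau')=0,\ \operatorname{rk}D\gamma(z,\tau')\leq 3\,\}$ is exactly $\{\,((B,\Xi),z)\mid z\in\overline{\Sigma}_1(\gamma_\Xi)\,\}$, so it suffices to show $\dim\mathcal{V}\leq\dim\mathcal{A}_4=10$: the bound on fibre dimensions of the projection $\mathcal{V}\to\mathcal{A}_4$ then gives precisely the displayed inequalities. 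Note that $\mathcal{V}$ is a determinantal subvariety of the universal theta divisor over $\mathcal{A}_4$, which has dimension $13$, cut out by a rank condition on a symmetric $5\times5$ matrix, of codimension at most $3$; hence $\dim\mathcal{V}\geq 10$ unconditionally, and the whole point is to exclude excess dimension.

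First I would dispose of decomposable ppav. If $B=B_1\times B_2$ with $\dim B_j=g_j\geq1$, then $\Xi=(\Xi_1\times B_2)\cup(B_1\times\Xi_2)$, and along the component $\Xi_1\times B_2$ the tangent hyperplane to $\Xi$ is constant in the $B_2$-directions, so $\gamma_\Xi$ there is the composition of the projection onto $\Xi_1$ with the Gauss map of $\Xi_1$; in particular $\operatorname{rk}d\gamma_\Xi\leq g_1-1$ on $\Xi_1\times B_2$. Every product decomposition of a $4$-dimensional ppav has a factor of dimension at most $2$, so we may take $g_1\leq2$; then $\Xi_1\times B_2\subseteq\overline{\Sigma}_1(\gamma_\Xi)$, and since $\dim(\Xi_1\times B_2)=g_1+g_2-1=3=\dim\Xi$, the fibre of $\mathcal{V}$ over every decomposable point is exactly $3$-dimensional. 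As the decomposable locus has dimension $7$ (its largest component being $\mathcal{A}_1\times\mathcal{A}_3$), it follows that $\dim\mathcal{V}$ over the decomposable locus is at most $7+3=10$, and that $\{\dim\overline{\Sigma}_1(\gamma_\Xi)=3\}$ equals the decomposable locus, of codimension $3$.

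It remains to bound $\dim\mathcal{V}$ over the indecomposable locus, and there I would separate the smooth and singular parts of $\overline{\Sigma}_1(\gamma_\Xi)=\Sigma_1(\gamma_\Xi)\cup\{z\in\Xi^{sing}\mid\operatorname{rk}Q_z\Xi\leq3\}$. For the singular part: the locus $\{\dim\Xi^{sing}\geq k\}$ is contained in the Andreotti--Mayer divisor $N_0\subseteq\mathcal{A}_4$ for $k=1$, and is a proper subvariety of it (a general point of each component of $N_0$ has only isolated singularities, e.g.\ a single vanishing theta null on $\theta_{\tnull}$), hence has codimension at least $2$ for $k=1$, and a fortiori for $k=2$; moreover $\dim\Xi^{sing}\leq2$ always (Koll\'ar). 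Combined with the classical fact that for an indecomposable ppav the Gauss map $\gamma_\Xi$ is dominant, hence generically finite, onto $\mathbb{P}^3$ \cite{AndreottiTorelli1958} — so that $\Sigma_1(\gamma_\Xi)=\{\operatorname{rk}d\gamma_\Xi\leq1\}$ is a proper closed subset of the irreducible threefold $\Xi$ and $\dim\Sigma_1(\gamma_\Xi)\leq2$ — this already gives $\dim\overline{\Sigma}_1(\gamma_\Xi)\leq2$ for every indecomposable $(B,\Xi)\in\mathcal{A}_4$, settling $i=3$, and reduces $i=1$ and $i=2$ (for the smooth part) to the bound $\dim\mathcal{V}^{sm}\leq\dim\mathcal{A}_4$ for the universal smooth ramification locus $\mathcal{V}^{sm}=\{((B,\Xi),z)\mid z\in\Sigma_1(\gamma_\Xi)\cap\Xi^{sm}\}$.

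This last bound is the crux. Here $\Sigma_1(\gamma_\Xi)$ is the locus where the symmetric $3\times3$ second fundamental form of $\Xi$ has rank at most $1$, of expected codimension $\binom{3}{2}=3$ in the threefold $\Xi$, so the expected fibre dimension of $\mathcal{V}^{sm}\to\mathcal{A}_4$ is $0$ and the expected value of $\dim\mathcal{V}^{sm}$ is exactly $\dim\mathcal{A}_4$; but by Remark~\ref{rmk:alwaysbadpoints} the odd two-torsion points always lie in $\Sigma_1(\gamma_\Xi)$, so the bound is sharp and cannot be obtained by a naive transversality argument. I would establish it by (a) checking on an explicit example — a general genus-$4$ Jacobian $J(C)$ — that $\Sigma_1(\gamma_{J(C)})$ consists exactly of the $2^4(2^4-1)$ odd theta characteristics, with the expected transversality, so that $\mathcal{V}^{sm}$ has a single component of dimension $\dim\mathcal{A}_4$ dominating $\mathcal{A}_4$; and (b) a parameter count, supplemented if necessary by the structural description of the ramification of Gauss maps of theta divisors \cite{AdamsetalGauss1993,CodogniGrushevskySernesiGauss2017}, to exclude any component of $\mathcal{V}^{sm}$ of dimension exceeding $\dim\mathcal{A}_4$ — in particular to rule out, in codimension $<2$, a two-dimensional corank-$\geq2$ ramification locus. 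I expect step (b), and above all the two-dimensional case, to be where the real difficulty lies.
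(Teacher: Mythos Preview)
Your reduction via Lemma~\ref{lemma:boundcodimgeneral} and your treatment of $i=0,1,3$ match the paper's: $i=0$ is vacuous, $i=1$ is the result of \cite{AdamsetalGauss1993} (which already treats a general $(B,\Xi)\in\mathcal A_4$, not only a Jacobian), and for $i=3$ both you and the paper observe that dominance of the Gauss map forces $\overline\Sigma_1(\gamma_\Xi)\subsetneq\Xi$ whenever $\Xi$ is irreducible, so $\{\dim\overline\Sigma_1\geq3\}$ lies in the decomposable locus of codimension~$3$. Your handling of the singular contribution to $\overline\Sigma_1$ is also fine in substance.

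The genuine gap is at $i=2$, and you name it yourself: step~(b) is not carried out. From dominance of the Gauss map you only get $\dim\Sigma_1(\gamma_\Xi)\leq 2$ for indecomposable $(B,\Xi)$, hence merely $\operatorname{codim}\{\dim\overline\Sigma_1\geq2\}\geq1$; the proposed ``parameter count'' to upgrade this to $\geq2$ is left as a hope. The paper closes this gap with a short elementary observation you are missing: if $\gamma_\Xi\colon\Xi^{sm}\to\mathbb P^3$ has \emph{finite fibres}, then automatically $\dim\Sigma_1(\gamma_\Xi)\leq 1$. Indeed, on any irreducible component $Z\subseteq\Sigma_1$ of positive dimension the differential of $\gamma_\Xi|_Z$ has rank at most~$1$ at a general point, so $\dim\gamma_\Xi(Z)\leq1$ by generic smoothness, and finiteness of the fibres forces $\dim Z\leq1$. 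The paper applies this twice: first with $\Xi$ smooth (where the Gauss map is finite), which places $\{\dim\overline\Sigma_1\geq2\}$ inside $N_{0,4}=\theta_{\tnull,4}\cup\mathcal J_4$; then at a hyperelliptic genus-$4$ Jacobian, a point of $\theta_{\tnull,4}\cap\mathcal J_4$ whose Gauss map still has finite fibres \cite{AndreottiTorelli1958} and whose $\Xi^{sing}$ is one-dimensional, so $\dim\overline\Sigma_1\leq1$ there as well. Hence the locus is proper in each component of $N_{0,4}$ and has codimension~$\geq2$. This one-line lemma about finite Gauss maps is precisely the missing idea in your step~(b), and it renders the parameter count unnecessary.
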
 
\begin{proof}
According to Lemma \ref{lemma:boundcodimgeneral}, we need to prove that 
\begin{equation}
\operatorname{codim}_{\mathcal{A}_{4}} \{ (B,\Xi) \,|\, \dim \overline{\Sigma}_1(\gamma_{\Xi}) \geq i \} \geq i\,,  \qquad \text{ for } i=0,1,2,3\,.
\end{equation}
\begin{itemize} 
\item For $i=0$, this is immediate, though as observed in Remark \ref{rmk:alwaysbadpoints}, it is crucial that $g=5$.
\item For $i=1$, we need to show that for a general $(B,\Xi)\in
  \mathcal{A}_4$, the Thom-Boardman locus
  $\overline{\Sigma}_1(\gamma_{\Xi})$ is finite dimensional. This is proved by Adams, McCrory, Shifrin and Varley in \cite{AdamsetalGauss1993}, where they show that the locus consists exactly of the odd two-torsion points of Remark \ref{rmk:alwaysbadpoints}.
\end{itemize}
For the cases where $i>1$, we start with a general observation. Assume
that the Gauss map $\gamma_{\Xi}\colon \Xi^{sm} \to \mathbb{P}^3$ has
finite fibers, then $\dim \Sigma_1(\gamma_{\Xi}) \leq 1$. Indeed, let
$Z\subseteq \Sigma_1(\gamma_{\Xi})$ be an irreducible component of
positive dimension. By construction, the differential of the induced map ${\gamma_{\Xi}}_{|Z}\colon Z\to \mathbb{P}^3$ has rank at most one at a general point of $Z$, so $\dim \gamma_{\Xi}(Z) \leq 1$ by generic smoothness. Since the map has finite fibers, it follows that $\dim Z\leq 1$. Hence if $i>1$, we can have $\dim \overline{\Sigma}_1(\gamma_{\Xi}) \geq i$ if and only if the locus $\{ z'\in \Xi^{sing} \,|\, \operatorname{rank} Q_{z'}\Xi \leq 3  \}$ has dimension at least $i$.
\begin{itemize}
	\item For $i=2$, let $(B,\Xi) \in \mathcal{A}_4$ be such that $\Xi$ is smooth. Then the Gauss map $\gamma_{\Xi} \colon \Xi \to \mathbb{P}^3$ is finite and the singular locus is empty, so the previous observation implies $\dim \overline{\Sigma}_1(\gamma_{\Xi})\leq 1$. This proves that the locus $ \{ (B,\Xi) \,|\, \dim \overline{\Sigma}_1(\gamma_{\Xi}) \geq 2 \}$  is contained in the divisor $N_{0,4} = \{ (B,\Xi) \,|\, \Xi^{sing} \ne \emptyset \}$. This divisor has two irreducible components, which are exactly the theta null divisor $\theta_{\tnull,4}$ and the Jacobian locus $\mathcal{J}_4$ \cite{BeauvillePrym1977}. Now, let $(B,\Xi)$ be the Jacobian of a hyperelliptic curve of genus $4$. Then $(B,\Xi) \in \theta_{\tnull,4}\cap \mathcal{J}_4$ \cite{BeauvillePrym1977}, and since it is a Jacobian, we know that the Gauss map $\gamma_{\Xi}\colon \Xi^{sm} \to \mathbb{P}^3$ has finite fibers \cite{AndreottiTorelli1958}. Moreover, the singular locus of $\Xi$ has dimension $1$, so the previous observation gives $\dim \overline{\Sigma}(\gamma_{\Xi}) \leq 1$. This proves that the locus $ \{ (B,\Xi) \,|\, \dim \overline{\Sigma}_1(\gamma_{\Xi}) \geq 2 \}$ does not contain the intersection $\theta_{\tnull,4}\cap \mathcal{J}_4$; in particular, it does not coincide with any of the two components, so it has codimension at least two.
	\item For $i=3$, let $(B,\Xi)\in  \mathcal{A}_4$ be such that $\Xi$ is irreducible. Then $\dim \overline{\Sigma}_1(\gamma_{\Xi}) \geq 3$ if and only if $\overline{\Sigma}_1(\gamma_{\Xi}) = \Xi$, but this is impossible because the Gauss map is dominant, so its differential at a general point is an isomorphism. Hence, the locus $ \{ (B,\Xi) \,|\, \dim \overline{\Sigma}_1(\gamma_{\Xi}) \geq 3 \}$ is contained in the locus of decomposable abelian varieties, which has codimension $3$.
\end{itemize}
\end{proof}

\subsection{Conclusion}

We can finally prove our main theorem, and we rewrite the argument here for clarity.

\begin{proof}[Proof of Theorem \ref{thm:mainthm}]
We want to show that $\mathcal{J}_5\cap \theta_{\tnull}$ is an
irreducible component of $\theta^3_{\tnull}$. As remarked before, we
know that $\mathcal{J}_5\cap \theta_{\tnull}$ is irreducible of
dimension $11$, thus if $\mathcal{Z}_5\subseteq \theta^3_{\tnull}$ is
the irreducible component containing $\mathcal{J}_5\cap
\theta_{\tnull}$, we need to show that $\dim \mathcal{Z}_5\leq 11$, or
equivalently, $\operatorname{codim}_{\theta_{\tnull}} \mathcal{Z}_g
\geq 3$. We consider the intersection with $X_5$ inside the partial
compactification $\overline{\mathcal{A}_5}^1$; we know from
Proposition \ref{prop:smoothpoint5} that $\mathcal{Z}_5$ and $X_5$
intersect in a smooth point of $\theta_{\tnull}$, hence
$\operatorname{codim}_{\theta_{\tnull}}\mathcal{Z}_5 \geq
\operatorname{codim}_{X_5}(\mathcal{Z}_5\cap X_5)$. It is enough to
prove that this is at least $3$, or more generally, that
$\operatorname{codim}_{X_5}(\theta^3_{\tnull}\cap X_5) \geq 3$. This is precisely the bound of Proposition \ref{prop:boundcodim5}.
\end{proof}

\begin{example}\label{example:computations}

As we have observed in Section~\ref{section:intro}, this solution to the Schottky problem is effective, since the condition of having a vanishing theta null with a quadric cone of rank at most three can be checked explicitly. We present here an explicit example, where we use the Julia package for theta functions presented in \cite{AgostiniChuaNumerical2019}. Consider the period matrix $\tau \in \mathbb{H}_5$ given by
\begin{scriptsize} 
\begin{equation*}
\begin{pmatrix} 
0.402434+0.684132i  &  -0.181379+0.21894i  &    0.24323-0.134164i &
0.0040306+0.0508533i &  -0.318179+0.143837i\\
-0.181379+0.21894i  &    0.279144+1.01836i &   -0.0979857+0.462218i &
-0.0656562+0.609593i &    -0.14647+0.370062i\\
0.24323-0.134164i &  -0.0979857+0.462218i &    0.166629+0.681357i &
-0.286061+0.0203797i &   0.185582-0.150607i \\
0.0040306+0.0508533i &  -0.0656562+0.609593i &   -0.286061+0.0203797i &
0.0413648+1.4056i &      0.190249+0.828849i\\
-0.318179+0.143837i &    -0.14647+0.370062i &   0.185582-0.150607i &
0.190249+0.828849i &    0.74873+1.01168i
\end{pmatrix} 
\end{equation*}
\end{scriptsize}
We can computationally check that the theta constant with even characteristic 
\begin{equation}
m = \begin{bmatrix} 1 & 0 & 0 & 1 & 0 \\ 1 & 0 & 1 & 1 & 0
 \end{bmatrix} 
\end{equation}
vanishes at $\tau$. Moreover, we can compute the corresponding Hessian matrix
\begin{scriptsize}
\begin{equation*}
\begin{pmatrix}
 -2.79665+5.29764i & -9.57825-9.04671i  &  7.36305+2.28697i &
7.58338+5.34729i &   6.15667-1.90199i \\
-9.57825-9.04671i &  18.9738+8.34582i &   -23.1027-3.10545i &
-9.31944-0.822821i &  0.524289-3.64991i \\
7.36305+2.28697i & -23.1027-3.10545i  &  16.8441-1.15986i &
13.9363-4.56541i &  -3.32248+4.10698i \\
7.58338+5.34729i & -9.31944-0.822821i &  13.9363-4.56541i &
2.89309+1.21773i  &  3.86617-0.546202i \\
6.15667-1.90199i & 0.524289-3.64991i  & -3.32248+4.10698i &
3.86617-0.546202i & -12.9726-1.928i \\
\end{pmatrix}
\end{equation*}	
\end{scriptsize}
The Hessian has the following eigenvalues:
\begin{align*}
47.946229109152995 &+ 9.491932144035298i \\
-15.491689246713147 &+ 3.3401255907497958i \\
-9.512858919129267 &- 1.0587349322052013i \\
-2.7271385943272036\times 10^{-15} &- 1.1117459994936022i \times 10^{-14} \\
-5.698014266322794 \times 10^{-15} &+ 6.342925068807627i \times 10^{-15}
\end{align*}
so it is natural to expect that $\tau \in \theta^3_{\tnull}$. Indeed, $\tau$ is the Jacobian of the genus five curve $C$ obtained as the normalization of the singular plane octic
\begin{equation*}  
\{ x^6y^2-4x^4y^2-2x^3y^3-2x^4y+2x^3y+4x^2y^2+3xy^3+y^4+4x^2y+2xy^2+x^2-4xy-2y^2-2x+1 = 0 \}\,.
\end{equation*}
Using Macaulay2, we can write the equations of $C$ in the canonical
embedding and check that it is contained in a quadric of rank $3$, so
it has a vanishing theta null. To compute $\tau$ from $C$, we use the package \cite{BruinSage2019} in Sage \cite{SAGE}.  
\end{example}

\bibliographystyle{amsalpha}
\bibliography{bibliografia}

\end{document}